\newtheorem{theorem}{Theorem}
\newtheorem{pro}{Proposition}
\newtheorem{lemma}{Lemma}
\newtheorem{rem}{Remark}
\def\R{\mathbb{R}}
\def\N{\mathbb{N}}
\def\sgn{\mathop{\rm sgn}\nolimits}
\def\S{\mathcal S} 
\newcommand{\indi}[1] {1\hspace{-1mm} \mbox{\rm I}_{#1} }
\newcommand{\beq}{\begin{equation}}
\newcommand{\eneq}{\end{equation}}
\newcommand{\beqn}{\begin{eqnarray}}
\newcommand{\eneqn}{\end{eqnarray}}
\newcommand{\beqnon}{\begin{eqnarray*}}
\newcommand{\eneqnon}{\end{eqnarray*}}
\newcommand{\re}[1]{(\ref{#1})}
\def\max{\mathop{\rm max}\nolimits}
\title{Self-similar solutions for the generalized fractional 
Korteweg-de Vries equation}
\author{Luc Molinet\thanks{Institut Denis Poisson, Universit\'e de Tours, Universit\'e d'Orl\'eans, CNRS, Parc  Grandmont, 37200 Tours, France} \and St\'ephane Vento\thanks{Université Sorbonne Paris Nord, Laboratoire Analyse, Géométrie et Applications,LAGA, CNRS, UMR 7539, F‐93430, Villetaneuse, France } \and Fred Weissler\footnotemark[2] }
\begin{document}
\maketitle

\abstract{We consider the Cauchy problem for the generalized fractional 
Korteweg-de Vries equation
$$
u_t + D^\alpha u_x + u^p u_x = 0,\quad 1<\alpha\le 2,\quad p\in \N^*
$$
with homogeneous initial data $\Phi$. We show that, under smallness assumption on $\Phi$, and for a wide range of $(\alpha, p)$, including $p=3$, we can construct a self-similar solution of this problem.
}

\smallskip
\noindent  {\footnotesize \textsc{Keywords}:  Self-similar solution, Fractional KdV equation}.

\noindent {\footnotesize \textsc{AMS Subject Classifications (2020)}: 35C06, 35Q53, 35Q35.}

\section{Introduction}
The aim of this paper is to study the existence of self-similar solutions
 to the generalized fractional KdV  equation
\begin{equation} \label{KdV}
\left\{
\begin{array}{l}
u_t + D^\alpha u_x + u^p u_x = 0 \\
u(0)=\Phi
\end{array}
\right.
\end{equation}
where $u=u(t,x)$ is real valued function  defined on $\R_+\times\R$  and
 $p\in \N^*$. Here $1< \alpha\le 2$ and $D^\alpha$ is the Fourier multiplier with symbol $|\xi|^\alpha$.
 
Equation \eqref{KdV} is invariant under the scaling 
$$
u(t,x) \mapsto u_\lambda(t,x) = \lambda^{\frac\alpha p} u(\lambda^{\alpha+1} t ,\lambda x) , \quad \forall
\lambda>0, \; (t,x)\in \R_+ \times \R .
$$
We look for self-similar solutions of \eqref{KdV} that are fixed points of the above scaling mapping, they are of the form
\begin{equation}\label{self}
u(t,x) = t^{-\frac{\alpha}{(\alpha+1)p}} v(x/t^\frac{1}{\alpha+1}) , \quad (t,x)\in \R_+^* \times
 \R
\end{equation}
where $v = u(1)$ is the profile of the self-similar solution $u$. We infer from above that
the initial data $u(0,x)=\Phi(x)$ must be an homogeneous
function of degree $-\alpha/p$ and take the form
\begin{equation}\label{Phi-form}
\Phi(x) = \omega(x) |x|^{-\frac{\alpha}p}, \quad \omega(x) = \left\{\begin{array}{ll}\omega_+, &x>0\\ \omega_-,& x<0\end{array}\right.
\end{equation}
for some $\omega_+,\omega_-\in \R$.

Inserting \eqref{self} into \eqref{KdV} we find that the profile $v$ solves the equation
\begin{equation}\label{profile-eq}
(D^\alpha v+\frac{1}{p+1}v^{p+1})' - \frac{1}{\alpha+1}(\frac \alpha p v + xv') = 0.
\end{equation}

The class of equations \eqref{KdV} contains in particular the (generalized) KdV equation ($\alpha=2$) and the (generalized) Benjamin-Ono equation ($\alpha=1$). There is a wide literature around self-similar solutions to the gKdV equation especially concerning the mKdV equation ($p=2$) for which the self-similar profile satisfies a Painlev\'e type equation. The existence and precise asymptotic of solutions  to this equation has been widely studied (see \cite{HL}, \cite{DZ}). 
The importance of self-similar profiles in the studying of the PDE's was underlying in \cite{HN} where it was proven that such profiles appear naturally in the long time asymptotic of solutions to the mKdV equation (see also \cite{GPR}, \cite{H}). Recently new interest on self-similar solutions to the mkdV equation appears due to the link with the behavior of vortex filaments in fluid dynamics. In a series of works \cite{CCV1}, \cite{CCV2}, \cite{CC1}  the local well-posedness for small subcritical perturbations of self-similar solutions to the mKdV equation is proven.  
In the critical and super critical case $ p\ge 4$, self-similar profiles were constructed with precise asymptotic in \cite{BW} by means of the associated ODE. However the associated solutions do not belong to $\dot{H}^1(\R)$. In contrast,    self-similar solutions with finite energy are constructed in \cite{K} for $ p$ slightly larger than $ 4$. 
Let us  also mention \cite{MR} were the existence of self-similar solutions in the case $ p\ge 4 $ is established by proving the global well-posedness for small data in the critical homogenous Besov space $ \dot{ B}^{s_p,\infty}_2 $, $ s_p=\frac{1}{2}-\frac{2}{p} $,   that contains the homogeneous functions with suitable degree. We notice that, up to our knowledge, no result are known in the case $ p=3 $.
Concerning the case $ \alpha<2$ where the equation \eqref{profile-eq} for the self-similar profile becomes non local, in particular the generalized Benjamin-Ono equation, we are not aware of any results in the literature. \\

Our aim here  is to propose  a different approach to  prove the existence of self-similar solutions to \eqref{KdV}. Our approach is based on the integral equation associated to \eqref{KdV} and heavily makes use of the Strichartz estimates associated with the  linear  group. Substituting the self-similar solution by its form \eqref{self} in the integral equation, a crucial point is  that the space derivative on the nonlinear term that falls on the profile in the Duhamel part can be dropped by integration by parts with respect to the temporal variable. Then the self-similar profile is obtained as a fixed point of a mapping linked with the resulting integral form that does not contain any derivative.

 We do not make use directly of  \eqref{profile-eq} and  we are able to  tackle the case $ \alpha\in ]1,2] $. Actually $ \alpha=1 $ seems to be a limit case for our approach and we do not succeed to take it into account. The main new features  that we obtained in this work are the existence of self-similar solutions for the generalized KdV equation  for  $ p= 3 $ and for the fractional generalized KdV equation \eqref{KdV} with $ \alpha\in ]1,2[ $, $p\ge 3 $ and   $ p>\frac{\alpha}{\alpha-1}$.

Our main results read as follows.
\begin{theorem}\label{main-theorem}
Let $1<\alpha\le 2$ and  $p\ge 3$, $p \in \N$, with  $p>\alpha/(\alpha-1)$, and let   $\Phi$ be  an homogeneous function of degree $-\alpha/p$  as in (\ref{Phi-form}), with $|A_\alpha(1)\Phi|_{L^{p+1}}$ small enough, where $A_\alpha(1)$ is defined below in \eqref{linear-group}.
Then there exists a  profile $v\in C_b(\R) $ such that  $ u(t,x) = t^{-\frac{\alpha}{(\alpha+1)p}} v(x/t^\frac{1}{\alpha+1}) $ is a self-similar solution of  IVP  \re{KdV}. More precisely, $u(t,x)$ satisfies the Duhamel formulation of \re{KdV} (see \re{integralKdV}) for any $x\in\R$ and $t>0$, and $u(t)\to \Phi$ in $\mathcal{S}'(\R)$ as $t\to 0$.  
\end{theorem}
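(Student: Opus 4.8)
The plan is to work directly with the Duhamel (integral) formulation rather than with the profile equation \eqref{profile-eq}. Writing $A_\alpha(t)=e^{-tD^\alpha\partial_x}$ for the linear group (so that $A_\alpha(1)$ is the operator appearing in the statement) and setting $a=\frac{\alpha}{(\alpha+1)p}$, $b=\frac{1}{\alpha+1}$, the mild formulation reads
\beq
u(t)=A_\alpha(t)\Phi-\frac{1}{p+1}\int_0^t\partial_xA_\alpha(t-s)\bigl(u^{p+1}(s)\bigr)\,ds .
\eneq
Since $\Phi$ is homogeneous of degree $-\alpha/p$ and the linear flow respects the scaling, the first term is itself self-similar, $A_\alpha(t)\Phi=t^{-a}(A_\alpha(1)\Phi)(\cdot\,t^{-b})$, which is precisely why the size of the data is measured through $|A_\alpha(1)\Phi|_{L^{p+1}}$. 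Inserting the ansatz \eqref{self} and rescaling the time variable to $\sigma=s/t\in(0,1)$ collapses the whole right-hand side into self-similar form $t^{-a}(\cdots)(\cdot\,t^{-b})$, so that, at the level of profiles, the problem reduces to a single fixed-point equation $v=A_\alpha(1)\Phi+\mathcal N(v)$ for the profile $v$ alone.

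The key preliminary manipulation, as announced in the introduction, is to move the derivative $\partial_x$ off $u^{p+1}$. Using $\partial_sA_\alpha(t-s)=D^\alpha\partial_xA_\alpha(t-s)$, i.e. $\partial_xA_\alpha(t-s)=D^{-\alpha}\partial_sA_\alpha(t-s)$, I would integrate by parts in $s$. This trades the spatial derivative for the smoothing operator $D^{-\alpha}$ together with a boundary contribution proportional to $D^{-\alpha}\bigl(u^{p+1}(t)\bigr)$; one checks that $\alpha b=pa$ and $1-b=pa$ (both equal $\alpha/(\alpha+1)$), so that each piece produced carries exactly the homogeneity $-a$ and is therefore compatible with the self-similar structure. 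The upshot is an operator $\mathcal N(v)$ acting on $v^{p+1}$ through $D^{-\alpha}$ and through time-weighted convolutions against (derivatives of) the kernel of $A_\alpha(1)$, with no derivative falling on the profile $v$. The price is an integrable-in-time singularity near the endpoint $\sigma=1$ whose exponent is governed by $2b=\frac{2}{\alpha+1}$; this is integrable precisely when $\alpha>1$, which is the structural reason the method excludes the Benjamin--Ono endpoint $\alpha=1$.

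With a fixed-point equation free of derivatives on $v$ in hand, I would solve $v=A_\alpha(1)\Phi+\mathcal N(v)$ by the contraction mapping principle in a scaling-invariant space $Y$ adapted to the data: concretely a norm combining the critical space--time Strichartz norms of the associated solution $u$ (spatial exponent $p+1$, matching $|A_\alpha(1)\Phi|_{L^{p+1}}$) with an $L^\infty_x$ control yielding $v\in C_b(\R)$. The homogeneous Strichartz estimate bounds the linear term by $C\,|A_\alpha(1)\Phi|_{L^{p+1}}$, while the inhomogeneous Strichartz estimate, together with the local-smoothing gain of the group (which absorbs any residual derivative) and a Hölder bound $\|v^{p+1}\|\lesssim\|v\|^{p+1}$, gives $\|\mathcal N(v)\|_Y\lesssim\|v\|_Y^{p+1}$ and the matching Lipschitz estimate. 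The algebraic conditions $p\ge3$ and $p>\alpha/(\alpha-1)$ enter exactly here: they are what make the admissible Strichartz exponents and the time-weight produced by the derivative removal simultaneously summable near $\sigma=1$. The smallness hypothesis on $|A_\alpha(1)\Phi|_{L^{p+1}}$ then turns the map into a contraction on a small ball of $Y$, producing a unique small profile $v\in C_b(\R)$. I expect this nonlinear estimate near the temporal diagonal $s=t$ to be the main obstacle: it is where the derivative removal, the local-smoothing gain, and the restrictions $\alpha>1$, $p\ge3$, $p>\alpha/(\alpha-1)$ must all conspire, and it is the genuine obstruction ruling out $\alpha=1$.

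Finally I would reconstruct $u(t,x)=t^{-a}v(x\,t^{-b})$ and verify that it satisfies the Duhamel identity pointwise in $(t,x)$, which is immediate once the self-similar reduction above is read in reverse. For the initial trace, the linear part satisfies $A_\alpha(t)\Phi\to\Phi$ in $\mathcal S'(\R)$ by strong continuity of the group at $t=0$ applied to the tempered distribution $\Phi$; the nonlinear part, being self-similar with a bounded profile, scales like $t^{\,b-a}=t^{(p-\alpha)/((\alpha+1)p)}$ when tested against a Schwartz function, and since $p\ge3>2\ge\alpha$ we have $b-a>0$, so it tends to $0$. This yields $u(t)\to\Phi$ in $\mathcal S'(\R)$ and completes the argument.
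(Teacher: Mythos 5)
Your overall strategy (work with the Duhamel formulation, remove the derivative by an integration by parts in time, then contract in a scale-invariant space measured through $|A_\alpha(1)\Phi|_{L^{p+1}}$) is the paper's strategy, but your key step does not actually remove the derivative, and this is a genuine gap. You propose to write $\partial_x A_\alpha(t-s)=\pm D^{-\alpha}\partial_s A_\alpha(t-s)$ and integrate by parts in $s$. The bulk term then carries $\partial_s\bigl(u^{p+1}(s)\bigr)$, and for a self-similar $u$ one has $\partial_s u^{p+1}=-s^{-1}\bigl((p+1)a+\tfrac{1}{\alpha+1}\,x\partial_x\bigr)u^{p+1}$ with $a=\tfrac{\alpha}{(\alpha+1)p}$: a derivative of the profile reappears, so the resulting equation is not derivative-free as claimed. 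Worse, the boundary term at $s=0$ is $D^{-\alpha}A_\alpha(t)u^{p+1}(0)$, where $u^{p+1}(s)$ does not converge in $\mathcal S'$ as $s\to 0$ (tested against $\varphi$ it carries the prefactor $s^{\,b-(p+1)a}$ with $b-(p+1)a=\tfrac{p(1-\alpha)-\alpha}{(\alpha+1)p}<0$ for $\alpha>1$, $b=\tfrac{1}{\alpha+1}$), and $D^{-\alpha}$ has low-frequency problems on such non-decaying functions. The paper avoids all of this: it first inserts the self-similar form, performs the change of variables $\sigma=|y|/\tau^{1/(\alpha+1)}$ so that the time integral becomes an integral along a spatial ray on which the profile derivative is an \emph{exact} $d/d\sigma$, and integrates by parts there; no inverse derivative appears, and the output is the difference operator $R$ of \eqref{defR}, estimated via the H\"older continuity $A_\alpha(-1)g\in C^{0,\frac{\alpha-1}{2}}$ coming from the $L^1\to W^{\frac{\alpha-1}{2},\infty}$ smoothing estimate \eqref{strichartz2}. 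That vanishing H\"older gain is also the true obstruction at $\alpha=1$, not your heuristic $t^{2/(\alpha+1)}$ endpoint singularity; and the contraction is run in the single Lebesgue space $L^{p+1}$ for the profile (Proposition \ref{fixedpointtheorem}), not in mixed space--time Strichartz norms.

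The second gap is that you declare the reverse direction ``immediate once the self-similar reduction is read in reverse.'' It is not: having found a fixed point of the derivative-free map, one must undo the integration by parts to recover the genuine Duhamel identity, and this requires $f=A_\alpha(-1)\frac{v^{p+1}}{p+1}\in C^1(\R)$. Establishing this occupies half the paper: an iterative bootstrap (Lemma \ref{regularitylemma}) gains $\alpha-1$ H\"older derivatives per step until $f\in C^{0,\alpha(1-\frac1p)^-}$, and $\alpha(1-\tfrac1p)>1$ is exactly where the hypothesis $p>\alpha/(\alpha-1)$ enters --- the contraction alone needs only $p>\max(2,1/(\alpha-1))$, so your attribution of this hypothesis to Strichartz bookkeeping near the temporal endpoint is misplaced; likewise $v\in C_b(\R)$ comes from this bootstrap, not from building $L^\infty$ into the contraction norm. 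Your treatment of the initial trace is also insufficient as stated: if the Duhamel profile $V$ is merely bounded, testing $t^{-a}V(\cdot\, t^{-b})$ against $\varphi$ gives $t^{\,b-a}\int V(y)\varphi(t^b y)\,dy=O(t^{-a})$, which diverges. One needs near-$L^1$ decay of $V$; the paper proves $\bigl|t^{-a}R(\cdot/t^{b})\bigr|_{1^+}\lesssim t^{\frac{1}{\alpha+1}(1-\frac{\alpha}{p})^-}$ (the exponent being your $b-a$, up to an epsilon) by running the H\"older difference estimate with $\delta=0$ for $|x|\ge 1$ and $\delta=0^+$ for $|x|<1$, and only then concludes $u(t)\to\Phi$ in $\mathcal S'(\R)$.
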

With a supplementary constraint on $ p $ we are also able to prove that the self-similar profile $ v$ constructed above satisfies   \eqref{profile-eq}.
\begin{theorem}\label{theo2}
   If $p > 2\alpha /(\alpha-1)$, then the profile $v$ obtained in Theorem \ref{main-theorem} satisfies equation \eqref{profile-eq} in distributional sense and belongs to $C^\infty(\R) \cap W^{1,\infty}(\R) $.
   
   \end{theorem}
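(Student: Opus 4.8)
The plan is to promote the mere boundedness of $v$ coming from Theorem~\ref{main-theorem} to full smoothness through a bootstrap on the fixed-point equation, and then to differentiate the Duhamel formula \eqref{integralKdV} in time in order to recover \eqref{KdV}, from which \eqref{profile-eq} follows by the self-similar change of variables \eqref{self}. Throughout I would use that the profile behaves like $\omega_\pm|y|^{-\alpha/p}$ as $y\to\pm\infty$ (this is precisely what forces $u(t)\to\Phi$), so that $v^{p+1}$ decays like $|y|^{-(p+1)\alpha/p}$. The whole point of the stronger hypothesis $p>2\alpha/(\alpha-1)$, equivalently $\alpha/p<(\alpha-1)/2$, is that this decay beats the cost of one full spatial derivative measured against the local smoothing order $(\alpha-1)/2$ of the group $A_\alpha$.

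First I would write $v=A_\alpha(1)\Phi+\mathcal N(v)$, where $\mathcal N$ is the derivative-free integral operator produced by the integration by parts in time underlying the proof of Theorem~\ref{main-theorem}. The linear term $A_\alpha(1)\Phi$ is analyzed by stationary phase: since $\alpha>1$ the phase $\xi\mapsto \xi|\xi|^\alpha$ is non-degenerate away from the origin, so the oscillatory multiplier regularizes the origin singularity of $\Phi$ and $A_\alpha(1)\Phi\in C^\infty(\R)$, with spatial behavior governed by $|y|^{-\alpha/p}$ together with an oscillatory dispersive tail. I expect its first derivative to be bounded exactly under $\alpha/p<(\alpha-1)/2$, while higher derivatives may grow, because differentiating the oscillatory tail brings down positive powers of $y$ from the phase; this is the reason the conclusion is $W^{1,\infty}$ and not $W^{k,\infty}$.

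Next comes the bootstrap on $\mathcal N(v)$. Starting from $v\in C_b(\R)$, I would differentiate $\mathcal N(v)$ in $y$, letting the derivative fall on the (smooth, off-origin) kernel; each such derivative is integrable against $v^{p+1}$ precisely because of the decay $|y|^{-(p+1)\alpha/p}$ and the constraint on $p$, which also controls the contributions near the endpoints of the time integral. This yields $v\in C^1$, hence $v^{p+1}\in C^1$ with $\partial_y(v^{p+1})=(p+1)v^pv'$ decaying even faster; iterating, the obstruction at spatial infinity and at the time endpoints only improves, the procedure closes, and $\mathcal N(v)\in C^\infty$ with all derivatives bounded. Combined with the linear term this gives $v\in C^\infty(\R)\cap W^{1,\infty}(\R)$.

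Finally, with $v\in C^\infty\cap W^{1,\infty}$ the function $u(t,x)=t^{-\alpha/((\alpha+1)p)}v(x/t^{1/(\alpha+1)})$ is smooth in $x$ and decays at infinity, so $u^pu_x$ and $D^\alpha u$ are well-defined and the decay estimates justify differentiating \eqref{integralKdV} in $t$ under the integral sign. The boundary contribution at $s=t$ produces $-u^pu_x$ and the remaining integral reassembles, together with the linear part, into $-D^\alpha u_x$, so that $u_t+D^\alpha u_x+u^pu_x=0$ in the sense of distributions on $\R_+^*\times\R$. Inserting the self-similar ansatz \eqref{self} and applying the chain rule then reproduces exactly the profile identity \eqref{profile-eq}, which therefore holds in $\mathcal D'(\R)$. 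The main obstacle is the bootstrap step: rigorously bounding the $y$-derivatives of $\mathcal N(v)$ uniformly, controlling the time integral near $s=0$ and $s=1$, and making sense of $D^\alpha v$ as a tempered distribution despite the slow decay of $v$ at infinity, is where the gap between $p>\alpha/(\alpha-1)$ and $p>2\alpha/(\alpha-1)$ is consumed.
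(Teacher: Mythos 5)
Your overall order of operations inverts the paper's, and the inversion is where the proof breaks. You try to extract $C^\infty$ directly from the fixed-point equation by repeatedly differentiating the derivative-free operator $\mathcal N$, ``letting the derivative fall on the kernel''. But the kernel of $A_\alpha(\pm 1)$ is an Airy-type oscillatory kernel: it is bounded, its derivatives grow at infinity, and neither it nor its derivatives are integrable, so differentiation cannot be traded against the (merely conjectural, see below) decay of $v^{p+1}$ in the way you claim. The only smoothing at your disposal is the Kenig--Ponce--Vega estimate \eqref{strichartz2}, which gains exactly $\frac{\alpha-1}{2}<\frac12$ derivatives from $L^1$ data, and the paper's Lemma \ref{regularitylemma} shows this gain cannot be iterated indefinitely: the operator $R$ of \eqref{defR} has a hard regularity ceiling, because the estimates of the terms $R_1$ and $R_4$ require convergence of $\int^\infty \sigma^{-2+\alpha/p+\beta}\,d\sigma$, i.e.\ $\beta<1-\alpha/p$, so the H\"older regularity of $f$ usable in the iteration saturates at $1-\alpha/p$, and the regularity of $v$ obtainable from the integral equation caps at $W^{(1-\frac{\alpha}{p}+\frac{\alpha-1}{2})^-,\infty^-}$ --- strictly below $C^{3/2}$, never $C^\infty$. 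The hypothesis $p>2\alpha/(\alpha-1)$ is consumed precisely in making this capped exponent exceed $1$, which is how the paper gets $v\in C^1\cap W^{1,\infty}$, and no more. Your bootstrap also leans on the pointwise decay $v(y)\sim\omega_\pm|y|^{-\alpha/p}$, which the paper explicitly flags as only formal (end of the introduction); you are not entitled to it, and the paper's argument uses only $v\in L^{p^+}\cap L^\infty$.

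The paper's actual route to $C^\infty$ goes through the equation, not the integral operator: having secured $v\in C^1\cap W^{1,\infty}$, it differentiates the Duhamel formula in time at $t=1$ in the distributional sense (pairing with test functions, splitting the increment of the Duhamel term into $B_1+B_2$, checking that $u^{p+1}(s)\in L^2$ with an integrable power of $s$ since $3p>2\alpha$, and using continuity of $s\mapsto u^{p+1}(s)$ in $L^2$ near $s=1$) to obtain \eqref{profile-eq} in the sense of distributions. Only then does smoothness follow, by an elliptic-type bootstrap on the profile equation itself: rearranged, \eqref{profile-eq} gives $D^\alpha v'\in L^\infty_{loc}$, hence $v'\in W^{\alpha,\infty}_{loc}$ and $v\in C^2$ since $\alpha>1$; re-injecting yields $v\in C^3$, and induction gives $C^\infty$. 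Your final step (deriving the PDE and changing variables) is in the right spirit, and your heuristic that $\alpha/p<(\alpha-1)/2$ lets decay beat one derivative correctly identifies where the hypothesis enters; but without first having the distributional profile equation you have no mechanism producing regularity beyond $(1-\frac{\alpha}{p}+\frac{\alpha-1}{2})^-$, so the claimed $C^\infty$ conclusion from the integral equation alone is a genuine gap, not a technicality.
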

   
We conclude this introduction with a remark on the asymptotic behavior of the profile. Formally at least, we get that if $u$ is a self-similar solution of \re{KdV} with a profile $v$, then
$$
\Phi(x) = \lim_{t\searrow 0} u(t,x) = \lim_{t\searrow 0} t^{-\frac{\alpha}{(\alpha+1}p} v\left(\frac{x}{t^{1/(\alpha+1)}}\right) = |x|^{-\frac{\alpha}p} \lim_{r\to \infty} r^{\frac{\alpha}p}v\left(\sgn(x)r\right). 
$$  
In view of \re{Phi-form},  this forces 
$$
\lim_{r\to \infty} r^{\frac{\alpha}p}v\left(\sgn(x)r\right) = \omega(x).
$$   
Therefore, taking $x > 0$ it follows that
$$
v(r) \underset{+\infty}{\sim} \frac{\omega_+}{r^{\frac \alpha p}} \textrm{ if } \omega_+\neq 0 \quad \textrm{ and }\quad v(r) = \underset{+\infty}{o}\left(\frac{1}{r^{\frac{\alpha}p}}\right) \textrm{ if } \omega_+ = 0.
$$
A similar behavior may be obtained at $-\infty$ depending on $\omega_-$. In particular it seems that, for $ \Phi  $ non identically vanishing,  no  profile  belong to $L^q(\R) $ for $ 1\le q \le p/\alpha $. 

In order to establish  these various asymptotic behaviors rigorously, not just formally,
 one would need to show that initial value $\Phi$ is attained strongly enough as $t \to 0$ to imply that 
 indeed $\lim_{t\searrow 0} u(t,x) = \Phi(x)$ holds pointwise for all $x \neq 0.$
 By homogeniety, it would suffice to establish this limit for $x = \pm 1$.
\section{Notation and linear estimates}
\subsection{Notation}
For any positive numbers $a$ and $b$, the notation $a\lesssim b$ means that there exists a positive constant $c$ such that $a\le cb$. By $a\sim b$ we mean that $a\lesssim b$ and $b\lesssim a$.
Moreover, if $\gamma\in\R$, $\gamma^+$, respectively $\gamma^-$, will denote a number slightly greater, respectively lesser, than $\gamma$.

For any $1\le p\le \infty$, we denote by $|\cdot|_p$ the usual norm of the Lebesgue space $L^p(\R)$. The conjugate exponent of $p$ will be written $\overline{p}$, so that $1/p+1/\overline{p}=1$.
Moreover, if $s\in\R$, the Sobolev space $W^{s,p}(\R)$ is endowed with the norm
$$
|f|_{W^{s,p}} = |(1-\partial_{xx})^{s/2}f|_p .
$$ 
If $0<s<1$ and $1\le p\le\infty$, we also consider the Lipschitz space $\Lambda_s^{p,\infty}(\R)$ defined by its norm
$$
|f|_{\Lambda_s^{p,\infty}} = |f|_p + \sup_{t\neq 0} \frac{|f(\cdot + t) - f(\cdot)|_p}{|t|^s} ,
$$
and $C^{0,s} = \Lambda_s^{\infty,\infty}$ holds for the  H\"{o}lder space. Recall the classical Sobolev embeddings (see \cite{St}, Theorem 5 page 155)
\begin{equation}\label{lip-embedding}
  W^{s,p}(\R) \hookrightarrow \Lambda_s^{p,\infty}(\R),\quad 0<s<1, \quad p\ge 2
\end{equation} 
and
\begin{equation}\label{lip-embedding2}
   \Lambda_s^{p,\infty}(\R) \hookrightarrow W^{s,p}(\R),\quad 0<s<1, \quad p\le 2.
\end{equation}
When $1<s<2$, we say that $f\in \Lambda^{p,\infty}_s(\R)$ if and only if $f\in L^p(\R)$ and $f'\in \Lambda^{p,\infty}_{s-1}(\R)$. In this case \eqref{lip-embedding} and \eqref{lip-embedding2} still hold with $C^{0,s}=\Lambda^{\infty,\infty}_s$ again.

Let $A_\alpha(t)$ be the linear group of the generalized fractional 
Korteweg-de Vries equation,
\begin{equation}\label{linear-group}
  A_\alpha(t) = e^{tD^\alpha\partial_x}.
\end{equation}
 Then the integral
 equation associated with \re{KdV} reads
\begin{equation} \label{integralKdV}
u(t)=A_\alpha(t) \Phi -\int_0^t A_\alpha(t-t') (u^p u_x)(t')dt'.
\end{equation}

\subsection{Linear estimates}

The following Lemma will be of constant use.

\begin{lemma}\label{Strichartzestimates}
\begin{enumerate}
\item  For any $ r\in [1,2] $, there exists $ C>0 $ such that
\begin{equation}
| A_\alpha(1) \varphi |_{W^{\frac{\alpha-1}2(\frac 2r-1), \overline r}}
\leq C |\varphi |_r, \quad \forall \varphi
\in L^r. \label{strichartz2}
\end{equation}
where $ 1/r+1/{\overline r} =1 $.
\item For any $q\in[2,\infty]$, there exists $C>0$ such that
\begin{equation} \label{strichartz}
|A_\alpha(1)\varphi|_q\le C |\varphi|_r,\quad \forall r\in\left[\max\left(1, (\frac{\alpha+1}2-\frac \alpha q)^{-1}\right), {\overline q}\right],
\end{equation}
provided the right-hand side is finite.

\end{enumerate}
\end{lemma}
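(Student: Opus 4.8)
The plan is to deduce both inequalities from two elementary fixed-time facts about the group $A_\alpha(1)$ and then to interpolate. The first fact is that $A_\alpha(1)$ is an isometry of $L^2$, since its symbol $e^{i\xi|\xi|^\alpha}$ has modulus one; the second is a dispersive bound read off from the oscillatory kernel
\[
K(x)=\frac{1}{2\pi}\int_\R e^{i(\xi|\xi|^\alpha+x\xi)}\,d\xi .
\]
The phase $\phi(\xi)=\xi|\xi|^\alpha$ obeys $|\phi''(\xi)|\sim|\xi|^{\alpha-1}$ and, when $x$ and $\xi$ have the appropriate signs, has a single stationary point at $|\xi_0|\sim|x|^{1/\alpha}$. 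First I would split $K$ into $|\xi|\le1$ and $|\xi|\ge1$. On $|\xi|\le1$ the phase is only $C^1$ (as $1<\alpha\le2$) but non-stationary once $|x|$ is large, so one integration by parts gives $O(|x|^{-1})$; on $|\xi|\ge1$ the phase is smooth and van der Corput / stationary phase applies, yielding $|K(x)|\lesssim\min(1,|x|^{-(\alpha-1)/(2\alpha)})$. The key refinement is that the kernel of $(1-\partial_{xx})^{(\alpha-1)/4}A_\alpha(1)$ is \emph{bounded}, because at the stationary point the amplitude $(1+\xi_0^2)^{(\alpha-1)/4}\sim|x|^{(\alpha-1)/(2\alpha)}$ exactly absorbs the stationary-phase gain $|\phi''(\xi_0)|^{-1/2}\sim|x|^{-(\alpha-1)/(2\alpha)}$.

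For part (1), i.e. \eqref{strichartz2}, this provides the two endpoints. At $r=2$ the gain $\frac{\alpha-1}{2}(\frac2r-1)$ vanishes and the statement is the $L^2$ isometry. At $r=1$, boundedness of the kernel of $(1-\partial_{xx})^{(\alpha-1)/4}A_\alpha(1)$ is precisely $|A_\alpha(1)\varphi|_{W^{(\alpha-1)/2,\infty}}\lesssim|\varphi|_1$. I would then apply Stein's analytic interpolation theorem to the family $T_z=(1-\partial_{xx})^{\frac{\alpha-1}{4}(1-z)}A_\alpha(1)$ on $0\le\Re z\le1$: on $\Re z=0$ one combines the $r=1$ endpoint with the fact that the imaginary power $(1-\partial_{xx})^{-\frac{\alpha-1}{4}iy}$ is bounded on $L^\infty$ with at most polynomial growth in $y$ (admissible for Stein), while on $\Re z=1$ one uses the isometry. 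A direct computation of the exponents at interpolation parameter $\theta$, where $1/r=1-\theta/2$ and $1/\overline r=\theta/2$, returns the Sobolev index $\frac{\alpha-1}{2}(1-\theta)=\frac{\alpha-1}{2}(\frac2r-1)$, as claimed.

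For part (2), i.e. \eqref{strichartz}, it is convenient to picture the admissible pairs in the $(1/q,1/r)$-plane: they fill the closed triangle with vertices $(1/2,1/2)$, $(0,1)$ and $(1/q_0,1)$, where $q_0=\frac{2\alpha}{\alpha-1}$, bounded below by the dual diagonal $r=\overline q$ and above by $1/r=\min(1,\frac{\alpha+1}{2}-\frac\alpha q)$. By Riesz--Thorin it suffices to treat the vertices. The vertex $(2,2)$ is the isometry and $(\infty,1)$ is the plain dispersive bound $|A_\alpha(1)\varphi|_\infty\lesssim|\varphi|_1$ (that is, $|K|_\infty<\infty$). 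The interior and the open parts of the two lower edges follow more robustly by interpolating the Littlewood--Paley block estimate $|A_\alpha(1)P_N\varphi|_\infty\lesssim N^{-(\alpha-1)/2}|\varphi|_1$ against the isometry, correcting integrability with Bernstein's inequality, and summing; this covers the whole open range $1/r<\min(1,\frac{\alpha+1}{2}-\frac\alpha q)$, and for $q>q_0$ the kernel already lies in $L^q$, so the endpoint $r=1$ is immediate from Young's inequality.

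The step I expect to be the main obstacle is the lower endpoint $r=\max(1,(\frac{\alpha+1}{2}-\frac\alpha q)^{-1})$, and most acutely the single vertex $(q_0,1)$. On that boundary line the power of $N$ produced by the block estimate is exactly $\frac\alpha q-\frac{\alpha-1}{2}$, which vanishes at $q=q_0$, so every dyadic piece contributes with the same size and the $\ell^1$ summation fails; equivalently $K$ belongs to the Lorentz space $L^{q_0,\infty}$ but not to $L^{q_0}$, so neither crude summation nor Young's inequality reaches it. Here I would abandon strong-type summation and argue by real interpolation: record the block bounds as restricted weak-type estimates, sum them in the Lorentz sense, and recover the strong $L^q$ target on the open edge by interpolating against the strong estimates available on either side of $q_0$. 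This borderline is genuinely delicate --- at $q=q_0$, $r=1$ only the weak-type statement is immediate by these means --- and is where the proof of \eqref{strichartz} must concentrate; once it is settled, Riesz--Thorin fills in the remaining closed triangle.
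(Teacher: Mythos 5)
Your proof runs on a genuinely different track from the paper's. The paper does not prove \eqref{strichartz2} at all: it is quoted from Kenig--Ponce--Vega \cite{KPV} (Corollary 2.3), and \eqref{strichartz} is then obtained in three lines by applying \eqref{strichartz2} with $r=\overline q$, commuting $(1-\partial_{xx})^{s/2}$ with the group to get $|A_\alpha(1)\varphi|_q\lesssim|\varphi|_{W^{-s,\overline q}}$ with $s=\frac{\alpha-1}2(1-\frac 2q)$, and invoking the Sobolev embedding $L^r\hookrightarrow W^{-s,\overline q}$, valid for $1\le r\le\overline q$ and $\frac1r-\frac1{\overline q}\le s$ --- exactly the stated range. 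Your reconstruction of part (1) --- boundedness of the kernel of $(1-\partial_{xx})^{\frac{\alpha-1}4}A_\alpha(1)$ because the amplitude at the stationary point cancels the stationary-phase gain, then Stein interpolation against the $L^2$ isometry --- is in substance the KPV proof itself, so nothing is lost there; and your direct treatment of part (2) (vertices of the admissible triangle, Littlewood--Paley summation in the open region, Young's inequality for $q>q_0=\frac{2\alpha}{\alpha-1}$) is heavier than the paper's duality-plus-embedding argument, but it is self-contained and lays bare the endpoint structure that the paper's formulation hides.

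Two caveats. First, a repairable flaw in your interpolation setup: the imaginary Bessel power $(1-\partial_{xx})^{-\frac{\alpha-1}4 iy}$ is \emph{not} bounded on $L^\infty$ (imaginary powers are H\"ormander--Mikhlin multipliers, bounded on $L^p$ only for $1<p<\infty$), so you cannot split off that factor on the line $\Re z=0$. The repair is to estimate the kernel of the full complex-order operator $(1-\partial_{xx})^{\frac{\alpha-1}4(1-iy)}A_\alpha(1)$ directly: the extra unimodular symbol $(1+\xi^2)^{-i\frac{\alpha-1}4 y}$ perturbs the stationary-phase constants only polynomially in $y$, which is admissible for Stein's theorem; this is in effect what KPV do. (Relatedly, blockwise van der Corput on $|\xi|\ge 1$ gives $O(1)$ per dyadic block and the naive sum diverges logarithmically; you must exploit non-stationarity away from $\xi_0$ so that only the block containing $\xi_0$ contributes $O(1)$.) Second --- and this is a point in your favor --- the corner $(q,r)=(q_0,1)$ that you isolate as reachable ``only in weak type by these means'' is in fact genuinely false as a strong bound: the kernel is a fractional Airy function with oscillatory tail of size $|x|^{-\frac{\alpha-1}{2\alpha}}$, so $K\in L^{q_0,\infty}\setminus L^{q_0}$ (for $\alpha=2$ this is $\mathrm{Ai}\notin L^4$), and for a convolution operator, $L^1\to L^{q_0}$ boundedness would force $K\in L^{q_0}$ by testing on an approximate identity. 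Thus the closed interval in \eqref{strichartz} overstates the result at this single point, and the paper's own proof silently degenerates there too: at $q=q_0$, $r=1$ the required embedding is the critical $L^1$ Bessel-potential bound, which fails. This is harmless for the paper, since every later application chooses $r$ strictly inside the admissible interval (the nonempty-intersection checks are strict inequalities), and your real-interpolation plan recovers exactly what is true: strong type on the open edges, weak type at the corner. So do not spend effort trying to close that vertex --- it cannot be closed.
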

\begin{proof}
Estimate \re{strichartz2} can be found in (\cite{KPV}, Corollary 2.3 page 330). To obtain \re{strichartz}, we first apply \re{strichartz2} with $r={\overline q}$ to get for all $\varphi$ that
$$
|A_\alpha(1)\varphi|_q\le C|\varphi|_{W^{-\frac{\alpha-1}2(1-\frac 2q), {\overline q}}}.
$$
Thus by Sobolev embeddings, we are done if $1\le r\le {\overline q}$ and $1/r-1/{\overline q}\le \frac{\alpha-1}2(1-2/q)$, which is the condition stated in \re{strichartz}. Note that the interval in \re{strichartz} is never empty since 
$$
(\frac{\alpha+1}{2}-\frac{\alpha}{q})^{-1} \le \overline{q} \Leftrightarrow \frac{\alpha+1}{2}-\frac{\alpha}{q}\ge \frac{1}{\overline{q}}=1-\frac{1}{q} \Leftrightarrow  \alpha( \frac{1}{2}-\frac{1}{q})\ge \frac{1}{2}-\frac{1}{q} 
$$
that is satisfied for $ \alpha\ge 1 $ and $ q\ge 2 $. 
\end{proof}

Now we state a result concerning the regularity of the linear part in our integral equation \re{integralKdV}.

 \begin{lemma} \label{linearlemma}
If $p\ge 3$ and $ \Phi $ is an homogeneous function of degree $ -\alpha/p$, then
  \begin{equation}
 A_\alpha(1)\Phi \in L^{\max(\frac{p}{\alpha}, (\frac{1}{2} -\frac{1}{p}+\frac{1}{2\alpha})^{-1})^+}\cap W^{(\frac{\alpha+1}2-\frac\alpha p)^-,\infty} \: .
 \label{linear}
 \end{equation}
 \end{lemma}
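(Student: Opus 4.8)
The plan is to exploit the homogeneity of $\Phi$ through a Littlewood--Paley decomposition, reducing everything to the dyadic pieces, on which the fixed--time estimates of Lemma~\ref{Strichartzestimates} can be applied. Write $\Phi = P_{\le 0}\Phi + \sum_{j\ge 1}P_j\Phi$, where $P_j$ localizes frequencies to $|\xi|\sim 2^{j}$ and $P_{\le0}$ to $|\xi|\lesssim 1$. The key computational input is that, since $\Phi$ is homogeneous of degree $-\alpha/p$, its Fourier transform is homogeneous of degree $-1+\alpha/p$; hence each annular piece $P_j\Phi$ is a Schwartz function and the scaling $P_j\Phi(x)=2^{j\alpha/p}(P_0\Phi)(2^j x)$ gives
\[
|P_j\Phi|_r \sim 2^{j(\frac\alpha p-\frac1r)}|P_0\Phi|_r,\qquad 1\le r\le\infty .
\]
Since $A_\alpha(1)$ commutes with $P_j$, the problem decouples into estimating $A_\alpha(1)P_j\Phi$ in various norms and summing a geometric series in $2^{j}$; the crucial point is that $r$ may be chosen differently for high and for low frequencies, which is exactly what is needed because a homogeneous function lies in no single $L^r$.

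For the Hölder/Sobolev regularity in \eqref{linear} I would use only the high frequencies. Applying \eqref{strichartz2} at the endpoint $r=1$ to each piece gives $|A_\alpha(1)P_j\Phi|_{W^{(\alpha-1)/2,\infty}}\lesssim|P_j\Phi|_1=2^{j(\alpha/p-1)}$, and since the frequencies are $\sim 2^j$ this yields
\[
|P_jA_\alpha(1)\Phi|_\infty \lesssim 2^{-j(\frac{\alpha+1}2-\frac\alpha p)} .
\]
As $p>\alpha$ the exponent is negative, so these bounds are summable, and by the Besov characterization $B^{s}_{\infty,\infty}=\Lambda_s^{\infty,\infty}$ of the Hölder spaces they control $|A_\alpha(1)\Phi|_{W^{s,\infty}}$ for every $s<\frac{\alpha+1}2-\frac\alpha p$, which is the claimed regularity $(\frac{\alpha+1}2-\frac\alpha p)^-$. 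The strict inequality (the superscript $-$) is the endpoint loss inherent in passing to a summable series.

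For the integrability I would split the dyadic sum at $j=0$. On the high frequencies ($j\ge1$), which carry the near--origin singularity, I apply \eqref{strichartz} with an admissible $r<p/\alpha$, so that $|P_jA_\alpha(1)\Phi|_q\lesssim 2^{j(\alpha/p-1/r)}$ is summable; optimizing $r$ up to $p/\alpha$ against the constraint $r\ge(\frac{\alpha+1}2-\frac\alpha q)^{-1}$ of \eqref{strichartz} yields membership in $L^q$ precisely for $q>(\frac12-\frac1p+\frac1{2\alpha})^{-1}$. The low frequencies ($j\le 0$) carry the slowly decaying far field $|x|^{-\alpha/p}$; here $A_\alpha(1)P_{\le0}$ is convolution with a fixed kernel $\tilde K$ whose symbol $e^{i\sgn(\xi)|\xi|^{\alpha+1}}\chi(\xi)$ is smooth and compactly supported, so $\tilde K$ is Schwartz and $\tilde K*\Phi$ is bounded with the same far--field decay $|x|^{-\alpha/p}$. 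This places the low--frequency part in $L^q$ for every $q>p/\alpha$ and in $L^\infty$. Intersecting the two ranges gives \eqref{linear} with $Q=\max(p/\alpha,(\frac12-\frac1p+\frac1{2\alpha})^{-1})$, the $\max$ reflecting exactly the competition between the far field and the singularity.

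The main obstacle is precisely the far field. Both estimates of Lemma~\ref{Strichartzestimates} start from $L^r$ with $r\le 2$, whereas integrability of the tail $|x|^{-\alpha/p}$ requires $r>p/\alpha$; when $p\ge 2\alpha$ these two requirements are incompatible, so the low frequencies cannot be handled by the Strichartz inequalities at all. The resolution above --- peeling off the low frequencies and treating them by direct convolution with the Schwartz kernel $\tilde K$, which preserves both boundedness and the $|x|^{-\alpha/p}$ decay --- is the one genuinely non--routine point. Verifying that the remaining geometric sums converge and assembling the endpoint exponents $(\cdot)^{+}$ and $(\cdot)^{-}$ is then bookkeeping.
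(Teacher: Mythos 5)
Your proof is correct, and its skeleton is the same as the paper's: split $\Phi$ into low and high frequencies, treat the low frequencies (which carry the non-integrable tail $|x|^{-\alpha/p}$, out of reach of the $L^r$, $r\le 2$, hypotheses of Lemma \ref{Strichartzestimates}) by direct convolution with the fixed kernel $A_\alpha(1)\chi$, and feed the high frequencies into \eqref{strichartz2} with $r=1$ and into \eqref{strichartz}. Where you differ is in how the high-frequency piece is quantified. The paper computes $\hat\Phi_2$ explicitly via Stein's formula for Fourier transforms of homogeneous functions, deduces the pointwise behavior $D^\beta\Phi_2\sim |x|^{-\alpha/p-\beta}$ near the origin together with rapid decay at infinity, hence $\Phi_2\in L^q$ for $q<p/\alpha$ and $D^\beta\Phi_2\in L^1$ for $\beta<1-\alpha/p$, and then applies the Strichartz lemma once. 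You instead exploit the homogeneity through the exact rescaling identity $P_j\Phi=2^{j\alpha/p}(P_0\Phi)(2^j\cdot)$, apply the lemma blockwise, and sum geometric series; this avoids computing fractional derivatives of $\Phi_2$ altogether, makes the endpoint losses $(\cdot)^+$ and $(\cdot)^-$ transparent as the usual Besov endpoint losses (note that the inhomogeneous characterization of $B^{s}_{\infty,\infty}$ only asks for an $L^\infty$ bound on the low-frequency block, which your kernel argument supplies, so your ``high frequencies only'' treatment of the regularity is legitimate), and yields exactly the same exponents: summability forces $r<p/\alpha$, whose compatibility with $r\ge(\frac{\alpha+1}{2}-\frac{\alpha}{q})^{-1}$ is precisely $q>(\frac12-\frac1p+\frac1{2\alpha})^{-1}$, matching the paper's computation. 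The paper's route gives more explicit pointwise information on $\Phi_2$; yours is more robust and arguably cleaner bookkeeping. One small shared idealization: for $\alpha<2$ the kernel $\tilde K$ is not literally Schwartz, since the symbol $e^{i\sgn(\xi)|\xi|^{\alpha+1}}\chi(\xi)$ is only finitely smooth at $\xi=0$ (the paper's claim that $\widehat{D^kg}\in\mathcal S(\R)$ has the same defect); but $\tilde K$ decays like $|x|^{-N}$ for any $N<\alpha+2$, which is more than enough to preserve boundedness and the $|x|^{-\alpha/p}$ far-field decay, so neither argument is harmed.
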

 \proof
 Let $ \hat{\chi} \in C^\infty_0 (\R) , \; \hat{\chi}\equiv 1 $
  on a neighborhood of the origin. We split $ \Phi $ in a
   low frequencies part and a high frequencies part
  by setting
  $$
  \Phi = \Phi_1 + \Phi_2 \quad \hbox{ with } \Phi_1=\chi\star \Phi
  \;.
  $$
  To treat the low frequencies we note that
   $ A_\alpha(1)\Phi_1=g\star \Phi $ with $ g=A_\alpha(1)\chi $. Since
   $ |\Phi (x) |\leq c/|x|^{\alpha/p} $ by \re{Phi-form} and for any $ k\in \N $
   $$
   \widehat{D^k g}=|\xi|^k \, e^{i\xi|\xi|^\alpha} \hat{\chi}(\xi)\in {\cal
   S}(\R) ,
   $$
   it is easy to see that
   $$
   |D^k A_\alpha(1)\Phi_1 |=|D^k g \star \Phi |\leq
   \frac{c}{(1+|x|)^{\alpha/p}}.
  $$
 Hence, for any $ k\in \N $,
 \beq
 D^k A_\alpha(1) \Phi_1 \in L^q ,\quad p/\alpha< q \leq \infty \; .
 \label{lowfreq}
 \eneq
On the other hand, we get from \cite{St}, Lemma page 73 that for any $ 0<\theta< 1 $, 
$$
{\cal F}\left(\frac{\sgn(x)}{|x|^{\theta}}\right)(\xi) = c\frac{\sgn(\xi)}{|\xi|^{1-\theta}},
$$
and it follows that
$$
\hat{\Phi}_2(\xi)=\frac{1-\hat{\chi}(\xi)}{|\xi|^{1-\alpha/p}}(c_1+c_2\sgn(\xi)) \in
C^\infty .
$$
Therefore, at infinity $ \Phi_2 $ and its derivatives decrease faster than
any polynomial. It thus suffices to study the local
behavior of $ \Phi_2 $. From above, one has for $ 0\le \beta< 1-\alpha/p $,
\begin{align*}
D^\beta \Phi_2(x)&={\cal
F}^{-1}\Bigl(\frac{c_1+c_2\sgn(\xi)}{|\xi|^{1-\alpha/p-\beta}}\Bigr) - {\cal
F}^{-1}\Bigl(\frac{\hat{\chi}(\xi)(c_1+c_2\sgn(\xi))}{|\xi|^{1-\alpha/p-\beta}}\Bigr)\\
&=\frac{c_1'+c_2'\sgn(x)}{|x|^{\alpha/p+\beta}}- a(x)
\end{align*}
where $ a\in C^\infty(\R) $. \\
In particular, $ \Phi_2\in L^q(\R), \; 1\leq q < p/\alpha $, and
 $ D^\beta \Phi_2\in L^1(\R) $ whenever   $ 0\le \beta< 1-\alpha/p $.
It then follows from \eqref{strichartz2} with $r=1 $ that 
 $ D^\beta A_\alpha(1)\Phi_2 \in W^{\frac{\alpha-1}{2},\infty} $ for   $0\le  \beta< 1-\alpha/p $ that is 
 $ A_\alpha(1)\Phi_2 \in W^{(\frac{\alpha+1}{2}-\frac{\alpha}{p})^-,\infty} $.
 Moreover, according to  \eqref{strichartz}, $ A_\alpha(1)\Phi_2\in L^q(\R) $ if and only if 
 $$
 [1,\frac{p}{\alpha}[ \cap \left[\max\left(1, (\frac{\alpha+1}2-\frac \alpha q)^{-1}\right), {\overline q}\right]\neq \emptyset
 $$
 that is 
 $$
 \frac{p}{\alpha} >(\frac{\alpha+1}{2}-\frac{\alpha}{q})^{-1} \Leftrightarrow \frac{\alpha}{p} < \frac{\alpha+1}{2} -\frac{\alpha}{q} \Leftrightarrow \frac{1}{q} <\frac{1}{2} -\frac{1}{p}+\frac{1}{2\alpha} \; .
 $$
 We thus infer that 
\beq
 A_\alpha(1) \Phi_2\in L^{q^+}\cap W^{(\frac{\alpha+1}2-\frac\alpha p)^-,\infty} , \quad q>(\frac{1}{2} -\frac{1}{p}+\frac{1}{2\alpha})^{-1} \; .
 \label{highfreq}
\eneq
 Gathering \re{lowfreq} and \re{highfreq}, one obtains the desired
 result. \hfill
 $ \Box $

\section{Construction of the profile}

The profile of the self-similar solution will be obtained as a fixed point of a
certain mapping relating to the integral equation \re{integralKdV}. To
understand how we construct this mapping, assume that $u$ is a
self-similar solution of \re{integralKdV}. Then one has
\begin{align}
u(t,x) & = \lambda^{\frac\alpha p} u(\lambda^{\alpha+1} t ,\lambda x) , \quad \forall
\lambda>0, \; (t,x)\in \R_+ \times \R  \nonumber  \\
 & = t^{-\frac{\alpha}{(\alpha+1)p}} u(1,x/t^{\frac 1{\alpha+1}}) , \quad (t,x)\in \R_+^* \times
 \R.
 \label{selfsimil}
\end{align}
The function $v(\cdot)=u(1,\cdot)$ is called the profile of the
self-similar solution $u$ and  one infers from above that
the initial data $u(0,x)=\Phi(x)$ must be an homogeneous
function of degree $-\alpha/p$.
 Setting $g=v^{p+1}/(p+1)$, and using that  by dilation symmetry, for all $\lambda>0 $, $ s\in\R $  and all $\varphi\in {\mathcal S}'(\R) $  it holds 
 \begin{equation}\label{tata}
 (A_\alpha(\lambda^{\alpha+1}s)\varphi)(\lambda\cdot)=(A_\alpha(s) \varphi(\lambda\cdot))(\cdot) \; ,
 \end{equation}
one gets at least formally
\begin{align}
 u(t,x) &=  (A_\alpha(t)\Phi)(x) -\left[A_\alpha(t) \int_0^t A_\alpha(-\tau)
(u^p(\tau)
u_x(\tau) ) d\tau\right] (x) \nonumber\\
 & =  (A_\alpha(t)\Phi)(x) -\left[A_\alpha(t) \int_0^t \tau^{-1-\frac\alpha{(\alpha+1)p}} \left(A_\alpha(-\tau)
 g'\left(\frac{\cdot}{\tau^{\frac 1{\alpha+1}}} \right)\right) d\tau\right](x) \nonumber \\
  & =  (A_\alpha(t)\Phi)(x) -\left[A_\alpha(t) \int_0^t \tau^{-1 -\frac\alpha{(\alpha+1)p} }\left(A_\alpha(-1)
 g' \right) \left(\frac{\cdot}{\tau^{\frac 1{\alpha+1}}} \right) d\tau \right](x) \nonumber\\
 &=  (A_\alpha(t)\Phi)(x) -(A_\alpha(t) G(t,\cdot))(x).\label{toto}
\end{align}
with 
$$
 G(t,y):= \int_0^t \tau^{-1 -\frac\alpha{(\alpha+1)p} }\left(A_\alpha(-1)
 g' \right) \left(\frac{y}{\tau^{\frac 1{\alpha+1}}} \right) d\tau .
$$
 Performing the change of variable $\sigma \mapsto |y|/\tau^{\frac 1{\alpha+1}}$ in the above integral,  using 
 again \eqref{tata}
 and that $A_\alpha(t)$ commutes with spatial derivatives, we
 obtain for $t>0$,
 \begin{align*}
 G(t,y)&= \frac{\alpha+1}{|y|^{\frac \alpha p}}
 \int^\infty_{|y|/t^{\frac 1 {\alpha+1}}} \left( A_\alpha(-1) g' \right)
 (\sigma \sgn(y)) \sigma^{-1+\frac \alpha p} d\sigma \\ 
 &= \frac{\alpha+1}{|y|^{\frac \alpha p}}
 \int^\infty_{|y|/t^{\frac 1 {\alpha+1}}} \left( A_\alpha(-1) g\right)'
 (\sigma \sgn(y)) \sigma^{-1+\frac \alpha p} d\sigma
\end{align*}
 Setting $t=1$ and integrating by parts, using that $ p>\alpha\Rightarrow 1-\frac{\alpha}{p}>0$, it leads to 
  \begin{align*}
 G(1,y)&= (\alpha+1)\frac{\sgn(y)}{|y|^{\alpha/p}} \,\Bigl(- (A_\alpha(-1)g)(y) \, |y|^{-1+\frac{\alpha}{p}} \\
 &\quad +(1-\frac{\alpha}{p})\int^\infty_{|y|} (A_\alpha(-1)g)(\sigma \,
  \sgn(y))\sigma^{-2+\alpha/p}
 \, d\sigma \Bigr) \\
&= -(\alpha+1) (1-\frac{\alpha}{p})\frac{\sgn(y)}{|y|^{\alpha/p}}\,\int^\infty_{|y|}
 \sigma^{-2+\alpha/p}\\
 &\quad \times \Bigl(
 (A_\alpha(-1)g)(y) 
   - (A_\alpha(-1)g)(\sigma \, \sgn(y))\Bigl) \, d\sigma \, .
\end{align*}
Therefore  \eqref{toto}  with $ t=1$ gives
 \begin{align}\label{eqint-u1}
 u(1)   &=  A_\alpha(1)\Phi +(\alpha+1)(1-\frac{\alpha}{p})\,  A_\alpha(1)\Bigl[
\frac{\sgn(\cdot)}{|\cdot|^{\alpha/p}}\,\int^\infty_{|\cdot|}
 \sigma^{-2+\alpha/p} \Bigl(
 (A_\alpha(-1)g)(\cdot) \nonumber \\
  & \hspace{60mm} - (A_\alpha(-1)g)(\sigma \, \sgn(\cdot))\Bigl) \, d\sigma \, \Bigr]
 \; .
 \end{align}
 We denote by $ F=F_\Phi $ the map
 \beqnon
 v\in L^{p+1} & \longmapsto & A_\alpha(1)\Phi
 +(\alpha+1)(1-\frac\alpha p)\,  A_\alpha(1)\Bigl[
\frac{\sgn(\cdot)}{|\cdot|^{\alpha/p}} \,\int^\infty_{|\cdot|}
 \sigma^{-2+\alpha/p} \Bigl(
 (A_\alpha(-1)g)(\cdot)\\
 &&  \hspace{60mm}-(A_\alpha(-1)g)(\sigma \, \sgn(\cdot))\Bigl) \, d\sigma \,
 \Bigr]\; ,
 \eneqnon
 where $ g=v^{p+1}/(p+1) $.  \\
 Let us show that for $ | A_\alpha(1) \Phi |_{p+1} $ small enough, $ F_\Phi $
 is strictly contractive in a ball of $ L^{p+1} $.

\begin{pro} \label{fixedpointtheorem}
Let $1<\alpha\le 2$ and $p>\max(2, 1/(\alpha-1))$.
For $ | A_\alpha(1) \Phi |_{p+1} $ small enough, there exists a unique fixed point $v$ of $ F $  in a ball of $ L^{p+1} $.
\end{pro}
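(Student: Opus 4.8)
The plan is to prove that $F = F_\Phi$ is a contraction on a small closed ball $B_\rho = \{v \in L^{p+1} : |v|_{p+1} \le \rho\}$ by estimating $|F(v)|_{p+1}$ and $|F(v_1) - F(v_2)|_{p+1}$ in terms of the $L^{p+1}$ norms of the inputs. The first term $A_\alpha(1)\Phi$ contributes a fixed small constant $|A_\alpha(1)\Phi|_{p+1}$, so the whole scheme hinges on controlling the nonlinear operator
$$
N(v)(x) = \frac{\sgn(x)}{|x|^{\alpha/p}} \int_{|x|}^\infty \sigma^{-2+\alpha/p}\bigl[(A_\alpha(-1)g)(x) - (A_\alpha(-1)g)(\sigma\,\sgn(x))\bigr]\,d\sigma,
$$
where $g = v^{p+1}/(p+1)$, followed by application of $A_\alpha(1)$.

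**First I would** reduce the $L^{p+1}$ estimate of $A_\alpha(1)N(v)$ to an $L^r$ estimate of $N(v)$ itself via the smoothing inequality \eqref{strichartz} of Lemma \ref{Strichartzestimates}, choosing a suitable $r$ in the admissible interval for $q = p+1$; the hypothesis $p > 1/(\alpha-1)$ is what guarantees this gain is usable. The core is then bounding $|N(v)|_r$. I would split the $\sigma$-integral at $\sigma = 2|x|$: on the inner piece $|x| \le \sigma \le 2|x|$ I would use the difference structure $(A_\alpha(-1)g)(x) - (A_\alpha(-1)g)(\sigma\,\sgn(x))$ together with Hölder-type regularity of $A_\alpha(-1)g$ (i.e. a $\Lambda_s^{\cdot,\infty}$ bound coming from the smoothing estimate applied to $g$), so that the difference is controlled by $|\sigma - |x||^s$ times a modulus factor, which pairs against $\sigma^{-2+\alpha/p}$ to yield an integrable expression. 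On the outer piece $\sigma \ge 2|x|$ the two terms are estimated separately: the first gives $(A_\alpha(-1)g)(x)$ times $\int_{2|x|}^\infty \sigma^{-2+\alpha/p}\,d\sigma \sim |x|^{-1+\alpha/p}$, which after multiplication by $|x|^{-\alpha/p}$ produces a Hardy-type weight $|x|^{-1}$, and the second is a genuine weighted integral of $A_\alpha(-1)g$ at large argument.

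**The nonlinearity is homogeneous of degree** $p+1$ in $v$, so $g \in L^{(p+1)/(p+1)} = L^1$-type scaling gives $|A_\alpha(-1)g|$ in the relevant space bounded by $|v|_{p+1}^{p+1}$; hence the whole map obeys a bound of the form $|N(v)|_r \lesssim |v|_{p+1}^{p+1}$, and $|F(v)|_{p+1} \le |A_\alpha(1)\Phi|_{p+1} + C\rho^{p+1}$. Choosing $\rho \sim |A_\alpha(1)\Phi|_{p+1}^{1/1}$ small and using $p+1 > 1$ makes the right-hand side stay below $\rho$, giving self-mapping; the difference estimate follows from the algebraic identity $v_1^{p+1} - v_2^{p+1} = (v_1 - v_2)\sum_{j=0}^{p} v_1^j v_2^{p-j}$ and Hölder, yielding a Lipschitz constant $\lesssim \rho^p$ that is $<1$ for $\rho$ small.

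**The main obstacle I anticipate** is controlling the weighted singular integral $N(v)$ near the origin $x = 0$ and at spatial infinity simultaneously in a single $L^r$ norm: the factor $|x|^{-\alpha/p}$ is singular at $0$, the inner difference quotient demands genuine Hölder regularity of $A_\alpha(-1)g$ (not just $L^\infty$ control), and the condition $p > 2$ together with $p > 1/(\alpha-1)$ must be exactly what balances the weight exponent $-2 + \alpha/p$ against the available smoothing order so that every piece is finite and the homogeneity closes. Verifying that the admissible exponent interval in \eqref{strichartz} for $q=p+1$ is nonempty and contains an $r$ compatible with the regularity of $g = v^{p+1}/(p+1)$ (which lies only in $L^1$-based spaces) is the delicate bookkeeping step, and I expect it to dictate the precise hypotheses on $(\alpha,p)$.
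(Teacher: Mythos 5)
Your overall scheme is the paper's: a contraction on a small ball of $L^{p+1}$, with the linear term handled by hypothesis, the nonlinear term reduced to an $L^r$ bound on $R=N(v)$ via \eqref{strichartz}, the regularity input $f=A_\alpha(-1)g\in W^{\frac{\alpha-1}2,\infty}\hookrightarrow C^{0,\frac{\alpha-1}2}$ with norm $\lesssim|g|_1\lesssim|v|_{p+1}^{p+1}$, and the Lipschitz estimate from linearity of $R$ in $g$ plus H\"older. However, there is a genuine gap in your core estimate of $|N(v)|_r$, precisely where you anticipated trouble: on the outer region $\sigma\ge 2|x|$ you abandon the difference structure and estimate the two terms separately. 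This yields
$$
|x|^{-\alpha/p}\,|f(x)|\int_{2|x|}^\infty \sigma^{-2+\alpha/p}\,d\sigma \;\lesssim\; |f|_\infty\,|x|^{-1},
$$
and the second term is no better, since $f=A_\alpha(-1)g$ with $g\in L^1$ carries no spatial decay whatsoever --- all you have is $f\in L^\infty\cap C^{0,\frac{\alpha-1}2}$. A pointwise bound of size $|x|^{-1}$ near the origin belongs to no $L^r$ locally for any $r\ge 1$, while the Strichartz step forces $r\ge \max\bigl(1,(\frac{\alpha+1}2-\frac{\alpha}{p+1})^{-1}\bigr)$, and the window actually needed is $\bigl]1,\min(\frac{2}{3-\alpha},\frac p\alpha)\bigr[$. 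So your outer piece destroys local integrability at $x=0$ and no admissible $r$ closes the argument; calling the weight ``Hardy-type'' does not help, because there is no averaging structure and no vanishing of $f$ at the origin to exploit. (At infinity your separate estimate is harmless: $|x|^{-1}$ is fine in $L^r$, $r>1$, for $|x|\ge 1$.)

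The repair, which is exactly what the paper does, is to never give up the cancellation: for \emph{all} $\sigma\ge|x|$ one has $\bigl|\sigma-|x|\bigr|\le\sigma$, so the H\"older bound gives $|f(x)-f(\sigma\sgn(x))|\lesssim |g|_1\,\sigma^\delta$ on the whole half-line for every $0\le\delta\le\frac{\alpha-1}2$ with $\delta<1-\frac\alpha p$ (the latter ensuring convergence of $\int_{|x|}^\infty\sigma^{-2+\alpha/p+\delta}\,d\sigma$), whence $|R(x)|\lesssim|g|_1\,|x|^{\delta-1}$ with no splitting in $\sigma$ at all. Choosing $\delta>0$ close to $\min(\frac{\alpha-1}2,1-\frac\alpha p)$ for $|x|<1$ and $\delta=0$ for $|x|>1$ yields \eqref{est1R}, i.e.\ $|R|_r\lesssim|g|_1$ for $1<r<\min(\frac{2}{3-\alpha},\frac p\alpha)$; the hypotheses $p>1/(\alpha-1)$ and $p>2$ are then exactly what make this window intersect the admissible interval in \eqref{strichartz} for the target exponent $q=p+1$. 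With this replacement your remaining steps (self-mapping with $\rho\sim|A_\alpha(1)\Phi|_{p+1}$, the factorization of $v_1^{p+1}-v_2^{p+1}$ and H\"older giving a Lipschitz constant $\lesssim\rho^p$) are correct and coincide with the paper's conclusion of the fixed point argument.
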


\begin{rem}
  The condition on $(\alpha, p)$ is less restrictive than in Theorem \ref{main-theorem}. Stronger assumptions will be required to rigorously justify the calculations that lead to \eqref{eqint-u1}, see Proposition \ref{regularitytheorem}. 
\end{rem}

\proof Due to Lemma \ref{linearlemma}, $ A_\alpha(1)\Phi \in L^{\max(\frac{p}{\alpha}, (\frac{1}{2} -\frac{1}{p}+\frac{1}{2\alpha})^{-1})^+}\cap L^\infty$. Obviously, 
 $\frac{p}{\alpha}<p+1 $ and observe that 
 $$
  (\frac{1}{2} -\frac{1}{p}+\frac{1}{2\alpha})^{-1}<p+1 \Leftrightarrow \frac{1}{2} -\frac{1}{p}-\frac{1}{p+1}+\frac{1}{2\alpha}>0
  $$
  and that  $ h\,:\, p \mapsto \frac{1}{2} -\frac{1}{p}-\frac{1}{p+1}+\frac{1}{2\alpha} $ is an increasing function on $ \R_+^* $ with $ h(3)= -\frac{1}{12} +\frac{1}{2\alpha} >0 $ for $\alpha<6 $. 
  Therefore for $p\ge 3 $ and $ 1\le \alpha\le 2$, 
  $$
  A_\alpha(1) \Phi \in L^{p+1} 
  $$
and it remains to consider the
term $(\alpha+1)(1-\frac{\alpha}{p})A_\alpha(1)R$  where $R$ is defined by
 \begin{equation}\label{defR}
R(x)=\frac{\sgn(x)}{|x|^{\alpha/p}} \int^\infty_{|x|}
 \sigma^{-2+\alpha/p} \Bigl(
 (A_\alpha(-1)g)(x)- (A_\alpha(-1)g)(\sigma \sgn(x))\Bigl) \, d\sigma \;.
 \end{equation}
 Let $ v\in L^{p+1} $, then $ g=v^{p+1}/(p+1) \in
L^1 $ and according to Lemma \ref{Strichartzestimates},
\begin{equation}\label{toto}
f=A_\alpha(-1) g \in W^{\frac{\alpha-1}2,\infty}  \quad \text{with} \quad \|f\|_{W^{\frac{\alpha-1}2,\infty}}\lesssim \|g\|_{1} \; .
\end{equation}
In particular, by Sobolev embedding \eqref{lip-embedding},
 $$
 f\in C^{0,\frac{\alpha-1}2}(\R) .
 $$ 
  Let us show that
 \beq
 |R|_r \lesssim |g|_{1} , \quad \forall r\in
 \left]1,\min(\frac{2}{3-\alpha}, \frac{p}{\alpha})\right[.   \label{est1R}
 \eneq
 For any $0\le \delta\le \frac{\alpha-1}2$, we get
  \begin{align*}
| R(x) | &\leq  \frac{1}{|x|^{\alpha/p}}
 \int^\infty_{|x|} \sigma^{-2+\alpha/p} |f(\sigma \sgn(x))-f(x)|
 d\sigma \\
& \lesssim \frac{|f|_{C^{0, \frac{\alpha-1}2}}}{|x|^{\alpha/p}}
 \int^\infty_{|x|} \sigma^{-2+\alpha/p} \left|\sigma-|x|\right|^{\delta}
 d\sigma \\
 & \lesssim \frac{|f|_{C^{0, \frac{\alpha-1}2}}}{|x|^{\alpha/p}}
 \int^\infty_{|x|} \sigma^{-2+\alpha/p+\delta}
 d\sigma .
 \end{align*}
 Since according to \eqref{toto}, $ |f|_{C^{0,\frac{\alpha-1}2}} \leq c|g|_{1}$, for $\delta\le \frac{\alpha-1}2 $ with $ \delta <1-\frac \alpha p$  we infer
 $$
 |R(x)| \lesssim |g|_{1}|x|^{\delta-1} .
 $$
 Splitting $ R $ in the following way:
 $$
 R=R \indi{0<|x|<1}+ R \indi{|x|>1},
 $$
 it is deduced that 
 $$
 |R(x)| \lesssim |g|_{L^1} (|x|^{-\gamma} \indi{0<|x|<1} + |x|^{-1} \indi{|x|>1})  , \quad \gamma>\max(\frac{3-\alpha}{2}, \frac{\alpha}{p})
 $$
 and \eqref{est1R} follows directly. Now we observe  that 
 $$
 \frac{2}{3-\alpha}>(\frac{\alpha+1}{2}-\frac{\alpha}{p+1})^{-1} \Leftrightarrow p >\frac{1}{\alpha-1}
 $$
 and that 
 $$
 \frac{p}{\alpha}>(\frac{\alpha+1}{2}-\frac{\alpha}{p+1})^{-1} \Leftrightarrow \frac{\alpha+1}{2}-\frac{\alpha}{p+1}-\frac{p}{\alpha}>0 \; .
 $$
 This last inequality is always satisfied for $ p\ge 3 $ and $ \alpha\in ]1,2] $ since  $ h \, :\, p\mapsto \frac{\alpha+1}{2}-\frac{\alpha}{p+1}-\frac{\alpha}{p}$ is  an increasing function on $ \R_+^* $ 
  with $ h(3)= \frac{\alpha+1}{2}-\frac{7\alpha}{12} >0 $ for $\alpha<6 $.
 Therefore the assumptions  $p>\max(2,\frac{1}{\alpha-1})$ and  $1<\alpha\le 2 $ ensure that
 $$
 \left]1,\frac{2}{3-\alpha}\right[ \cap \left[\max\left(1, (\frac{\alpha+1}2-\frac \alpha {p+1})^{-1}\right), {\overline {p+1}}\right] \neq \emptyset .
 $$
  and  Lemma \ref{Strichartzestimates} and the definition of $ g$ then  lead to 
 \begin{equation}\label{estRR}
 | A_\alpha(1) R |_{p+1}\lesssim |g|_{1}\lesssim |v|^{p+1}_{p+1}  \; .
 \end{equation}
 This proves that 
  \beq
 |F(v)|_{p+1} \leq |A(1)\Phi |_{p+1} +c|v|^{p+1}_{p+1}.
\eneq
Now for $(v_1,v_2)\in (L^{p+1})^2 $ we notice that 
$$
F(v_1)-F(v_2) = (\alpha+1) (1-\frac{\alpha}{p}) A_\alpha(1)(R[g_1]-R[g_2]) 
$$
where for $ i=1,2$, $ g_i =v_i^{p+1}/(p+1)$ and for all $x\in\R $, 
$$
R[g] (x)=\frac{\sgn(x)}{|x|^{\alpha/p}} \int^\infty_{|x|}
 \sigma^{-2+\alpha/p} \Bigl(
 (A_\alpha(-1)g)(x)- (A_\alpha(-1)g)(\sigma \sgn(x))\Bigl) \, d\sigma \;.
$$
In view of the linearity of $R $ with respect to  $ g$ and \eqref{estRR} we infer that 
$$
|A_\alpha(1) (R[g_1]-R[g_2])|_{p+1} =  |A_\alpha(1) (R[g_1-g_2])|_{p+1}\lesssim |g_1-g_2|_1 
$$
that leads  to 
\beq
 |F(v_1)-F(v_2)|_{p+1} \leq c (|v_1|^{p}_{p+1}+|v_2|^{p}_{p+1})
 |v_1-v_2|_{p+1},
\eneq
since by H\"older inequality,
$$
|g_1-g_2|_1 =\frac{1}{p+1}|v_1^{p+1}-v_2^{p+1}|_{1} \lesssim (|v_1|^{p}_{p+1}+|v_2|^{p}_{p+1}) |v_1-v_2|_{p+1}
$$
 This completes the proof of the theorem.\hfill$ \Box $
 
 \section{Proof of Theorem \ref{main-theorem}}
 To show that a fixed point $v$ of $ F $ is the profile of a self-similar solution of the integral
 equation, we have to integrate by parts in $\sigma$. To justify this
 integration by parts we will prove that $ f=A_\alpha(-1)
 \frac{v^{p+1}}{p+1} $ belongs at least to $ C^1(\R) $. This is
 the aim of the following proposition.

 \begin{pro}\label{regularitytheorem}
Assume that  $1<\alpha\le 2$   and $p>\alpha/(\alpha-1)$. Let $ v\in L^{p+1} $ be a fixed point of $ F $, then
$$
f=A_\alpha(-1)\frac{v^{p+1}}{p+1}\in C^1(\R)\cap C^{0,1^+}(\R) .
$$
\end{pro}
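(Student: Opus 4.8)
The plan is to run a bootstrap that repeatedly exploits the smoothing of the group $A_\alpha$, using the fixed point identity
$$v = A_\alpha(1)\Phi + (\alpha+1)(1-\tfrac\alpha p)\,A_\alpha(1)R,$$
where $R=R[g]$ is given by \eqref{defR} and $g=v^{p+1}/(p+1)$. At the start one only knows $v\in L^{p+1}$, hence $g\in L^1$ and, by \eqref{strichartz2} with $r=1$ followed by \eqref{lip-embedding}, $f=A_\alpha(-1)g\in W^{\frac{\alpha-1}2,\infty}\hookrightarrow C^{0,\frac{\alpha-1}2}$. The same first pass (through \eqref{strichartz2} applied to $R\in L^r$, $r\to1^+$) together with Lemma \ref{linearlemma} shows $v\in L^\infty\cap C^{0,(\frac{\alpha-1}2)^-}$, so all the quantities below are well defined and $v$ is bounded.

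The core is one bootstrap step: assuming $f\in C^{0,c}$ for some $c>0$, I would first estimate the regularity of $R$. Since $f$ enters \eqref{defR} only through the differences $f(x)-f(\sigma\,\sgn x)$, the Hölder bound $|f(x)-f(\sigma\,\sgn x)|\lesssim|\sigma-|x||^{\min(c,(1-\alpha/p)^-)}$ (the cutoff at $1-\alpha/p$ being forced by convergence of the $\sigma$-integral, exactly as in the derivation of \eqref{est1R}) yields $|R(x)|\lesssim|x|^{\delta-1}$ near the origin with $\delta=\min(c,1-\alpha/p)$, and $|R(x)|\lesssim|x|^{-1}$ at infinity. Upgrading this to a fractional Sobolev bound $R\in W^{\rho,r}$ for $r$ close to $1$ and $\rho$ close to $\delta$, and then commuting $(1-\partial_{xx})^{\rho/2}$ through $A_\alpha$ before applying \eqref{strichartz2}, gives
$$A_\alpha(1)R\in W^{(\frac{\alpha-1}2+\min(c,1-\frac\alpha p))^-,\infty}.$$
Combined with $A_\alpha(1)\Phi\in W^{(\frac{\alpha+1}2-\frac\alpha p)^-,\infty}$ from Lemma \ref{linearlemma}, the fixed point identity then gives $v\in C^{0,s}$ for every
$$s<\min\Big(\tfrac{\alpha+1}2-\tfrac\alpha p,\ \tfrac{\alpha-1}2+\min(c,1-\tfrac\alpha p)\Big).$$

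To close the loop I would transfer this regularity back to $f$. Here I use that, by Lemma \ref{linearlemma} and \eqref{strichartz} (the hypothesis $p>\alpha/(\alpha-1)$ being exactly what makes the relevant index intervals intersect), one has $v\in L^p\cap L^\infty$ in addition to $v\in C^{0,s}$. The elementary bound $|v^{p+1}(\cdot+t)-v^{p+1}(\cdot)|\le(p+1)\,|v(\cdot+t)-v(\cdot)|\,\max(|v(\cdot+t)|,|v|)^p$ together with $|v(\cdot+t)-v(\cdot)|_\infty\lesssim|t|^s$ and $v\in L^p$ gives $g\in\Lambda_s^{1,\infty}\hookrightarrow W^{s,1}$ (by \eqref{lip-embedding2}), and one more application of \eqref{strichartz2} with $r=1$, after commuting $(1-\partial_{xx})^{s/2}$, yields $f=A_\alpha(-1)g\in W^{(\frac{\alpha-1}2+s)^-,\infty}\hookrightarrow C^{0,c'}$ with $c'=\frac{\alpha-1}2+s$. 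Thus each loop replaces $c$ by $c'=\frac{\alpha-1}2+s$, which increases $c$ by a fixed amount ($\ge\alpha-1>0$) until the ceiling $s=\frac{\alpha+1}2-\frac\alpha p$ is reached, i.e. until $c$ saturates at $\alpha-\frac\alpha p$. Since $\alpha-\frac\alpha p>1\iff p>\alpha/(\alpha-1)$, after finitely many loops I reach $f\in W^{s_0,\infty}$ for some $s_0\in(1,2)$; by the extended convention for $\Lambda$-spaces and \eqref{lip-embedding} this means $f\in C^{0,s_0}$ with $f'\in C^{0,s_0-1}$, i.e. $f\in C^1\cap C^{0,1^+}$. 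One may keep every intermediate $s$ below $1$ (it only needs to exceed $\frac{3-\alpha}2<1$ to push $c'$ past $1$), so the extended convention is invoked only at the very last step.

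The main obstacle is the regularity transfer for $R$ in the core step: passing from the pointwise/Hölder information on $f$ to a genuine fractional Sobolev bound $R\in W^{\rho,r}$. The delicate point is the singularity $R(x)\sim|x|^{\delta-1}$ at the origin (present no matter how smooth $f$ is, because of the weight $|x|^{-\alpha/p}$ in \eqref{defR}), so that one cannot simply differentiate $R$; instead one must estimate the $L^r$ modulus of continuity of $R$ directly from the integral formula, carefully exploiting the scaling $\sigma=|x|\tau$ and the cancellation in $f(x)-f(\sigma\,\sgn x)$. Everything else is bookkeeping built on Lemma \ref{Strichartzestimates} and the embeddings \eqref{lip-embedding}--\eqref{lip-embedding2}.
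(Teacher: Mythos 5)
Your proposal is correct and essentially reproduces the paper's own proof: the paper's Lemma \ref{regularitylemma} carries out precisely your core step, proving $|R(\cdot+t)-R(\cdot)|_{1^+}\lesssim |t|^{\beta^-}$ by splitting $R(x+t)-R(x)$ into four terms and rescaling $y=x/t$ (this is exactly the ``main obstacle'' you flag, and it is resolved along the lines you sketch, yielding $R\in\Lambda^{1^+,\infty}_{\beta^-}\hookrightarrow W^{\beta^-,1^+}$ via \eqref{lip-embedding2}), after which the iteration gains $(\alpha-1)^-$ of H\"older regularity per loop up to the same ceiling $\alpha(1-\frac1p)>1\iff p>\alpha/(\alpha-1)$ that you compute. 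The only deviation is in the transfer $v\mapsto v^{p+1}$, where the paper invokes the Christ--Weinstein fractional Leibniz estimate \eqref{estim_KPV} with $v\in L^{p^+}\cap W^{(\beta+\frac{\alpha-1}2)^-,\infty^-}$, while you use the elementary translation-difference bound through $\Lambda^{1,\infty}_s$; both give the same $W^{s^-,1^+}$ control of $v^{p+1}$, so this is a cosmetic simplification rather than a different route.
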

The proof will be based on an iteration of the following lemma.

\begin{lemma}\label{regularitylemma}
 Under the assumptions of Proposition \ref{regularitytheorem}, for any $\beta\in]0, 1-\alpha/p[$ such that $f\in C^{0,\beta}$, we have $f\in C^{0,(\beta+\alpha-1)^-}$.
\end{lemma}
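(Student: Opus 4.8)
The plan is to read the fixed-point identity as a closed loop for $f$ and to traverse it once, gaining $\frac{\alpha-1}{2}$ of smoothness from each of the two occurrences of the linear group. Writing $g=v^{p+1}/(p+1)$, so that $f=A_\alpha(-1)g$, the relation $v=F(v)$ becomes $v=A_\alpha(1)\Phi+(\alpha+1)(1-\frac\alpha p)A_\alpha(1)R$, where $R$ is the quantity of \eqref{defR} in which $A_\alpha(-1)g$ has been replaced by $f$. Hence $f$ is reconstructed from itself through one application of $A_\alpha(1)$ (producing $v$, and then $g$) and one of $A_\alpha(-1)$ (producing $f$), each of which gains $\frac{\alpha-1}2$ derivatives through \eqref{strichartz2}, while the H\"older exponent $\beta$ of $f$ is transported around the loop. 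The expected net gain is therefore $\alpha-1$, the loss of an endpoint (the $^-$) coming from the fact that $R$ decays only like $|x|^{-1}$ at infinity, so that the first group must be used at an integrability exponent strictly above $1$.

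The first and main step is to prove that $R\in\Lambda^{r,\infty}_\beta$ for every $r$ slightly above $1$, with $|R|_{\Lambda^{r,\infty}_\beta}\lesssim|f|_{C^{0,\beta}}$. Rewriting \eqref{defR} for $x>0$ as $R(x)=\frac c x f(x)-x^{-\alpha/p}\int_x^\infty y^{-2+\alpha/p}f(y)\,dy$ (and symmetrically for $x<0$), one first records, exactly as in the proof of Proposition \ref{fixedpointtheorem} but with $\beta$ in place of $\frac{\alpha-1}2$, the pointwise bound $|R(x)|\lesssim|f|_{C^{0,\beta}}\big(|x|^{\beta-1}\indi{|x|<1}+|x|^{-1}\indi{|x|>1}\big)$, which is finite precisely because the hypothesis $\beta<1-\alpha/p$ makes the $\sigma$-integral converge; this already gives $R\in L^r$ for $1<r<1/(1-\beta)$. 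The real work is the modulus-of-continuity bound $|R(\cdot+h)-R|_r\lesssim|h|^\beta$: one differences the two terms above, estimates them by means of $|f(a)-f(b)|\lesssim|a-b|^\beta$ together with $|f|_\infty$, and integrates against the singular weight $|x|^{\beta-1}$ near the origin and $|x|^{-1}$ at infinity, treating with extra care the crossing region $x<0<x+h$ where the sign factor in \eqref{defR} jumps. I expect this weighted difference estimate to be the principal obstacle of the whole lemma.

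Granting $R\in\Lambda^{r,\infty}_\beta$, I apply \eqref{strichartz2} to the translate $R(\cdot+h)-R$ and use that $A_\alpha(1)$ commutes with translations to get $|(A_\alpha(1)R)(\cdot+h)-A_\alpha(1)R|_{W^{\frac{\alpha-1}2(\frac2r-1),\overline r}}\lesssim|h|^\beta$, that is $A_\alpha(1)R\in\Lambda^{\overline r,\infty}_{\beta+\frac{\alpha-1}2(\frac2r-1)}$. Letting $r\downarrow1$, the smoothing exponent tends to $\frac{\alpha-1}2$ while $\overline r\to\infty$, so that $A_\alpha(1)R$ has regularity $(\beta+\frac{\alpha-1}2)^-$ in every $L^q$, $q<\infty$. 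Transferring this to the $L^{p+1}$ scale needed below (via H\"older's inequality on bounded sets, which costs no exponent, together with the decay of $A_\alpha(1)R$ on the complement) and comparing with Lemma \ref{linearlemma}, which gives $A_\alpha(1)\Phi\in W^{(\frac{\alpha+1}2-\frac\alpha p)^-,\infty}\cap L^{p+1}$ with $\frac{\alpha+1}2-\frac\alpha p>\beta+\frac{\alpha-1}2$ (again because $\beta<1-\alpha/p$), one finds that the linear term is strictly smoother and concludes $v\in\Lambda^{p+1,\infty}_{(\beta+\frac{\alpha-1}2)^-}$.

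It remains to close the loop. From the elementary inequality $|v^{p+1}(\cdot+h)-v^{p+1}|_1\lesssim|v|_{p+1}^p\,|v(\cdot+h)-v|_{p+1}$ and $v\in L^{p+1}$ one deduces $g=v^{p+1}/(p+1)\in\Lambda^{1,\infty}_{(\beta+\frac{\alpha-1}2)^-}$, hence $g\in W^{(\beta+\frac{\alpha-1}2)^-,1}$ by \eqref{lip-embedding2}. Applying \eqref{strichartz2} with $r=1$ to $D^{(\beta+\frac{\alpha-1}2)^-}g\in L^1$ then gains the last $\frac{\alpha-1}2$ derivatives and yields $f=A_\alpha(-1)g\in W^{(\beta+\alpha-1)^-,\infty}=C^{0,(\beta+\alpha-1)^-}$, which is the assertion. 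All exponents appearing above stay in the admissible ranges of Lemma \ref{Strichartzestimates} and of the embeddings \eqref{lip-embedding}--\eqref{lip-embedding2} thanks to $0<\beta<1-\alpha/p$; apart from the weighted estimate for $R$, every remaining step is routine bookkeeping.
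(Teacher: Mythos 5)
Your outline reproduces the paper's architecture for the main estimate: the pointwise bound $|R(x)|\lesssim |f|_{C^{0,\beta}}\bigl(|x|^{\beta-1}\indi{|x|<1}+|x|^{-1}\indi{|x|>1}\bigr)$, the translation-difference estimate $|R(\cdot+h)-R|_{1^+}\lesssim |h|^{\beta^-}$ (the paper carries this out through the four-term decomposition \eqref{decR}, your ``crossing region'' being its terms $R_2$ and $R_4$), one application of \eqref{strichartz2} with $r=1^+$ giving $A_\alpha(1)R$ regularity $(\beta+\frac{\alpha-1}{2})^-$ at integrability $\infty^-$, and a final application of \eqref{strichartz2} to $g$ to gain the second $\frac{\alpha-1}{2}$. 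So far this is the paper's proof, and your idea of replacing the fractional Leibniz rule \eqref{estim_KPV} by the elementary bound $|v^{p+1}(\cdot+h)-v^{p+1}|_1\lesssim |v|^p\,|v(\cdot+h)-v|$ in suitable Lebesgue norms is a legitimate, more elementary way to close the loop.

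However, there is a genuine gap in how you pair the H\"older exponents. Your choice $(\frac{p+1}{p},p+1)$ requires $v\in\Lambda^{p+1,\infty}_{(\beta+\frac{\alpha-1}{2})^-}$, and the justification offered --- transferring the $L^{\infty^-}$ difference bound to the $L^{p+1}$ scale ``via H\"older's inequality on bounded sets together with the decay of $A_\alpha(1)R$ on the complement'' --- does not work: you would need the difference $A_\alpha(1)R(\cdot+h)-A_\alpha(1)R$ to decay at spatial infinity with a rate carrying a factor $|h|^s$, and dispersive groups do not preserve spatial decay (for $\alpha=2$ the Airy kernel decays only like $|x|^{-1/4}$ on one side). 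Interpolating your $L^{\infty^-}$ difference bound against a trivial $L^{q_0}$ bound, $q_0<p+1$, multiplies the H\"older exponent by a factor $\theta<1$, and taking $\overline r=p+1$ directly in \eqref{strichartz2} only gains $\frac{\alpha-1}{2}\cdot\frac{p-1}{p+1}<\frac{\alpha-1}{2}$; either way the net gain per iteration falls strictly below $\alpha-1$ and the lemma's conclusion is lost. The repair is the step your proposal omits and which opens the paper's proof: first show $v\in L^{p^+}$ (via Lemma \ref{linearlemma} for $A_\alpha(1)\Phi$, and \eqref{est1R} together with \eqref{strichartz} for $A_\alpha(1)R$ --- this is precisely where the standing hypothesis $p>\alpha/(\alpha-1)$ enters, and the fact that your argument never invokes it is the telltale sign of the missing ingredient). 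Then pair the exponents as $(1^+,\infty^-)$: $|v^{p+1}(\cdot+h)-v^{p+1}|_{1^+}\lesssim |v|^p_{p^+}\,|v(\cdot+h)-v|_{\infty^-}$, which uses exactly the $L^{\infty^-}$ modulus of continuity you do control; note $v\in L^{p^+}$ cannot be obtained by soft interpolation from $v\in L^{p+1}\cap L^{\infty^-}$ since $p^+<p+1$. With that correction, your elementary difference estimate does substitute for \eqref{estim_KPV} (which the paper applies with the same exponents $r_1=p^+$, $r_2=\infty^-$, $r=1^+$), and the rest of your bookkeeping goes through.
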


\begin{proof}
Recall that $ v= A_\alpha(1) \Phi+(\alpha+1)(1-\frac\alpha p)\, A_\alpha(1) R $ with $R$ defined in \eqref{defR}. 
First we claim that $ v\in L^{p^+}(\R) $. For the linear part, this follows directly from Lemma \ref{linearlemma} since $ \frac{p}{\alpha}< p $ and 
$$
\frac{1}{2}-\frac{1}{p}+\frac{1}{2\alpha}>\frac{1}{p} \Leftrightarrow \frac{1}{2}-\frac{2}{p}+\frac{1}{2\alpha}>0,
$$
which is always satisfied for $ p\ge 3 $ and $ \alpha <3$. 
On the other hand, estimates \re{strichartz} and \re{est1R} give
   $ A_\alpha(1) R \in L^{p^+}(\R)$ since
$$
\left[\max\left(1, (\frac{\alpha+1}2-\frac{\alpha}{p^+})^{-1}\right), \overline{p^+}\right]\cap \left]1, \frac{2}{3-\alpha}\right[\neq \emptyset\textrm{ for } p>\frac\alpha{\alpha-1}.
$$
Assume now $f=A_\alpha(-1)v^{p+1}\in C^{0,\beta}$ for some $0<\beta<1-\alpha/p$ and let us estimate the following quantity:
\arraycolsep1pt
 \beqn
  R(x+t)&-& R(x) =\frac{\sgn(x)}{|x|^{\alpha/p}}
 \int_{|x+t|}^{\infty} \sigma^{-2+\alpha/p}\Bigl( f(x+t)-f(x) \Bigr)\,
 d\sigma \nonumber \\
  && + \frac{\sgn(x)}{|x|^{\alpha/p}}
 \int_{|x+t|}^{\infty} \sigma^{-2+\alpha/p}\Bigl( f(\sigma\sgn(x)) - f(\sigma \, \sgn(x+t)) \Bigr)\,
 d\sigma \nonumber \\
   && + \frac{\sgn(x)}{|x|^{\alpha/p}}
 \int_{|x|}^{|x+t|} \sigma^{-2+\alpha/p}\Bigl( f(\sigma\, \sgn(x))
 -f(x) \Bigr)\,
 d\sigma \nonumber \\
 && + \Bigl(
\frac{\sgn(x+t)}{|x+t|^{\alpha/p}}- \frac{\sgn(x)}{|x|^{\alpha/p}} \Bigr)
\int_{|x+t|}^{\infty} \sigma^{-2+\alpha/p}
\Bigl( f(x+t) - f(\sigma\sgn(x+t))\Bigr) \, d\sigma \nonumber \\
 && = R_1(t,x) + R_2(t,x)+R_3(t,x)+R_4(t,x) \; .
\label{decR}
 \eneqn
\arraycolsep5pt
Let us estimate one by one the contribution of these four terms. Below the implicit constant will depend on $|f|_{C^{0,\frac{\alpha-1}{2}}}$. 
\beqnon
 |R_1(t,x) | &\leq & \frac{1}{|x|^{\alpha/p}} |x+t|^{-1+\alpha/p}
 |f(x+t)-f(x)| \\
  & \lesssim  & \frac{|t|^\beta}{|x|^{\alpha/p}\,  |x+t|^{1-\alpha/p}}.
\eneqnon
 Performing the change of variable $ y=x/ t $ in the
integral,
 we deduce that
\beq \label{R1}
|R_1(t,\cdot)|_{1^+}  \lesssim |t|^{1^--1 +\beta}\,
 \Bigl| \frac{|x+1|^{-1+\alpha/p}}{|x|^{\alpha/p}}\Bigr|_{1^+}
  \leq c \, t^{\beta^-}
\eneq
To estimate the contribution of $R_3 $  we notice that $ |\sigma \sgn(x)-x|\le |\sigma|  +|x| $ and thus since $ 0<\beta<1 $, 
\begin{align*}
|R_3(t,x)| & \lesssim  \frac{1}{|x|^{\frac{\alpha}{p}}} \int_{|x|}^{|x+t|} \sigma^{-2+\frac{\alpha}{p}} (\sigma^\beta + |x|^\beta)\, d\sigma  \\
 & \lesssim  \frac{||x+t|^{-1+\frac{\alpha}{p}+\beta}
 -|x|^{-1+\frac{\alpha}{p}+\beta}|}{|x|^{\alpha/p}} +  \frac{||x+t|^{-1+\frac{\alpha}{p}}
 -|x|^{-1+\frac{\alpha}{p}}|}{|x|^{\frac{\alpha}{p}-\beta}}
\end{align*}
that leads by the same change of variable to 
\beqn
 |R_3(t,\cdot)|_{1^+} &\lesssim &  |t|^{\beta^-}\Bigl(\Bigl| \frac{|x+1|^{-1+\frac{\alpha}{p}+\beta}
 -|x|^{-1+\frac{\alpha}{p}+\beta}}{|x|^{\frac{\alpha}{p}}} \Bigr|_{1^+} 
 + \Bigl| \frac{|x+1|^{-1+\frac{\alpha}{p}}
 -|x|^{-1+\frac{\alpha}{p}}}{|x|^{\frac{\alpha}{p}-\beta}} \Bigr|_{1^+} \Bigr)\nonumber \\
  \label{R3} & \lesssim  & c \,|t|^{\beta^-}
\eneqn
where we used the mean value theorem to
 see that the functions inside the $ L^{1^+} $-norms of the
  right-hand side member
 decrease as $ |x|^{-2+
 \beta} $ at $ \infty $. \\
 To estimate $ |R_2 | $, we notice that
 \beqnon
 |R_2(t,x)| & \leq & \frac{\indi{|x|\leq |t|}}{|x|^{\alpha/p}}
 \,\int_{|x+t|}^{\infty} \sigma^{-2+\alpha/p}\Bigl| f(\sigma \, \sgn(x+t))
 -f(\sigma \, \sgn(x)) \Bigr| \,
 d\sigma \\
  & \lesssim  &  \, \frac{\indi{|x|\leq |t|}}{|x|^{\alpha/p}}
  |x+t|^{-1+\alpha/p+\beta}
  \eneqnon
  Therefore, performing the same change of variable as above,
\beqn
  |R_2(t,\cdot)|_{1^+} &\lesssim & |t|^{\beta^-}\;
  \Bigl| \frac{\indi{|x|\leq 1}}{|x|^{\alpha/p}\,
  |x+1|^{1-\alpha/p-\beta}}\Bigr|_{1^+}
  \nonumber \\
  \label{R2}& \lesssim & |t|^{\beta^-}
\eneqn
It remains to estimate the contribution of  $R_4  $.
 \beqnon
  |R_4(t,x)| & \leq & \Bigl|
\frac{\sgn(x+t)}{|x+t|^{\alpha/p}}- \frac{\sgn(x)}{|x|^{\alpha/p}} \Bigr|
\int_{|x+t|}^{\infty} \sigma^{-2+\alpha/p} \Bigl|
f(\sigma\sgn(x+t))-f(x+t)\Bigr| \, d\sigma \\
 & \leq & \Bigl|
\frac{\sgn(x+t)}{|x+t|^{\alpha/p}}- \frac{\sgn(x)}{|x|^{\alpha/p}} \Bigr|
\int_{|x+t|}^\infty  \sigma^{-2+\alpha/p} |\sigma-|x+t||^\beta\, d\sigma
\\
 & \leq & \Bigl|
\frac{\sgn(x+t)}{|x+t|^{\alpha/p}}- \frac{\sgn(x)}{|x|^{\alpha/p}} \Bigr|
 |x+t|^{-1+\alpha/p+\beta}
\eneqnon
 Hence, performing again the change of variable $
y=x/ t $ in the integral, it follows that
\begin{align}
 |R_4(t,\cdot)|_{1^+} & \leq  |t|^{\beta^-} \left|
\left|\frac{\sgn(y+1)}{|y+1|^{\alpha/p}}- \frac{\sgn(y)}{|y|^{\alpha/p}}
 \right|
 |y+1|^{-1+\alpha/p+\beta }\right|_{1^+}
  \nonumber \\
  \label{R4} & \leq  c |t|^{\beta^-}
\end{align}
since $ \left|\sgn(y+1)|y+1|^{-\alpha/p}-\sgn(y)|y|^{-\alpha/p} \right| $
 decreases
as $ |y|^{-1-\alpha/p} $ at $ \infty $. \\
Gathering \re{decR}--\re{R1}--\re{R3}--\re{R2}--\re{R4}, we infer that
 \beq
 |R(x+t)-R(x)|_{1^+} \leq c \, |t|^{\beta^-}.
 \eneq
 Thus $R$ belongs to the Lipschitz space $\Lambda^{1^+,\infty}_{\beta^-} $ and classical
 embedding \eqref{lip-embedding2} implies that $ R \in W^{\beta^-,1^+}$. From Lemma \ref{Strichartzestimates} it follows that
 $$
 A_\alpha(1) R \in W^{(\beta+\frac{\alpha-1}2)^-, \infty^-}.
 $$
 Hence, according to Lemma \ref{linearlemma}, 
 \begin{equation}\label{yy}
  v=A_\alpha(1) \Phi +(\alpha+1)(1-\frac\alpha p)\, A_\alpha(1) R \in L^{p^+} \cap
  W^{(\beta+\frac{\alpha-1}2)^-, \infty^-} 
  \end{equation}
   with $ 0<\beta <1-\frac{\alpha}{p} $.
Now, recalling the
  following inequality for any $ s>0 $ (cf. \cite{CW}, Proposition 3.1)
 \begin{equation}
  |D^s (v^{p+1})|_r \lesssim  |v|^p_{r_1}|D^s v |_{r_2}, \quad
 \frac{p}{r_1}+\frac{1}{r_2}=\frac{1}{r} , \quad 1<r,r_1,r_2 < \infty \;,
\label{estim_KPV}
\end{equation}
 we infer that
 $$ v^{p+1}\in W^{(\beta+\frac{\alpha-1}2)^-, 1^+}$$
Finally, Lemma \ref{Strichartzestimates} ensures that
$$
f=A_\alpha(-1) v^{p+1} \in W^{(\beta+\alpha-1)^-,\infty^- } \hookrightarrow C^{0,(\beta+\alpha-1)^-}.
$$

\end{proof}

We are now in a position to prove Proposition \ref{regularitytheorem}.

\begin{proof}[Proof of Proposition \ref{regularitytheorem}] We start by claiming that the fixed point $ v$ belongs to $L^{p^+}\cap W^{(\frac{\alpha-1}2)^-,\infty^-}$. Indeed,  according to the discussion in the beginning of the proof 
of the preceding  lemma, $ v\in L^{p^+}$. 
Now,  writing  again $ v$ as $ v=A_\alpha(1)\Phi+(\alpha+1)(1-\frac\alpha p)\,  A_\alpha(1) R $, we infer from Lemma \ref{linearlemma}   that $ A_\alpha(1) \Phi \in W^{(\frac{\alpha-1}2)^-,\infty^-}$.
 Moreover, according to \eqref{est1R}, $ R\in L^{1^+} $ and  Lemma \ref{Strichartzestimates} then ensures that $ A_\alpha(1)R\in W^{(\frac{\alpha-1}{2})^-,q} $ for $ q$ large enough, which proves the claim.

Now we deduce from \re{estim_KPV} combined with the fact that $v\in L^{p^+}\cap W^{(\frac{\alpha-1}2)^-,\infty^-}$, that $ D^{(\frac{\alpha-1}2)^-}v^{p+1} \in L^{1^+}(\R) $. Hence,  by Lemma \ref{Strichartzestimates} together with   the Sobolev embeddings $ W^{(\frac{\alpha-1}2)^-,\infty^-} \hookrightarrow C^{0,(\frac{\alpha-1}2)^-} $, we infer that $ f=A_\alpha(-1)\frac{v^{p+1}}{p+1} \in C^{0,(\frac{\alpha-1}2)^-} $. An iteration of Lemma \ref{regularitylemma} then yields  $f\in C^{0,\alpha(1-\frac 1p)^-}\hookrightarrow C^1(\R)$ for $p>\alpha/(\alpha-1)$.
\end{proof}
With Proposition \ref{regularitytheorem} in hands, we are in position to complete the proof of Theorem \ref{main-theorem}. This is the aim of the following proposition. 
\begin{pro}
Assume that  $1<\alpha\le 2$   and $p>\alpha/(\alpha-1)$.
Let $ v\in L^{p+1} $ be a fixed point of $ F_\Phi $ where $ \Phi $ is an
homogeneous function of degree $ -\alpha/p $. Then $ u(t,x) =
t^{-\frac{\alpha}{(\alpha+1)p}}
 v(x/t^\frac{1}{\alpha+1}) $ is a solution of the integral equation
 \re{integralKdV}.
\end{pro}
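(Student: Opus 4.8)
The plan is to reverse, rigorously, the formal chain \eqref{selfsimil}--\eqref{eqint-u1} that produced the fixed-point equation from the Duhamel formula, using the $C^1$ regularity of $f$ provided by Proposition \ref{regularitytheorem} to legitimize the integration by parts. First I would reduce to the single value $t=1$. Both sides of \eqref{integralKdV} are compatible with the self-similar scaling: $A_\alpha(t)\Phi$ inherits the correct homogeneity from the degree $-\alpha/p$ of $\Phi$ through the dilation identity \eqref{tata}, and the Duhamel term transforms in the same way after the substitution $t'\mapsto\lambda^{\alpha+1}t'$. Hence it suffices to establish $v=u(1)=A_\alpha(1)\Phi-\int_0^1 A_\alpha(1-t')(u^p u_x)(t')\,dt'$.

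Next I would give meaning to, and compute, the Duhamel term at $t=1$. Writing $u^p u_x=\partial_x(u^{p+1}/(p+1))$ and using self-similarity, one has $(u^p u_x)(t',\cdot)=t'^{-1-\frac{\alpha}{(\alpha+1)p}}\,g'(\cdot/t'^{1/(\alpha+1)})$ with $g=v^{p+1}/(p+1)\in L^1$. The weight $t'^{-1-\frac{\alpha}{(\alpha+1)p}}$ is not integrable near $t'=0$, so the Duhamel integral must be read as the improper integral $\lim_{\eps\to 0}\int_\eps^1$. Commuting $A_\alpha(1-t')=A_\alpha(1)A_\alpha(-t')$ past $\partial_x$, invoking \eqref{tata}, and crucially using that $A_\alpha(-1)g'=(A_\alpha(-1)g)'=f'$ is a genuine continuous function (this is precisely where Proposition \ref{regularitytheorem}, giving $f\in C^1$, is used), the change of variable $\sigma=|\cdot|/t'^{1/(\alpha+1)}$ recasts the Duhamel term as $A_\alpha(1)G(1,\cdot)$ with the $\sigma$-integral representation of $G(1,\cdot)$ obtained in Section 3.

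The decisive step is then the integration by parts in $\sigma$. Since $f\in C^1$, I can integrate by parts in $\int_{|y|}^\infty f'(\sigma\,\sgn y)\,\sigma^{-1+\alpha/p}\,d\sigma$; the boundary term at $\sigma=\infty$ vanishes because $f$ is bounded and $-1+\alpha/p<0$ (guaranteed by $p>\alpha/(\alpha-1)\ge\alpha$), while the boundary contribution at $\sigma=|y|$ produces the local term. This converts the merely conditionally convergent $G$-representation into the absolutely convergent expression built from the difference $f(\cdot)-f(\sigma\,\sgn\cdot)$, that is, into exactly the term $(\alpha+1)(1-\alpha/p)\,A_\alpha(1)R$ appearing in $F_\Phi$. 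The main obstacle lies here: one must justify interchanging $A_\alpha(1)$ with the improper time integral and with $\partial_x$, and verify that the $\eps\to0$ limit matches the $\sigma=\infty$ endpoint of the $R$-integral, the temporal singularity at $t'=0$ being tamed only after this integration by parts and relying simultaneously on the $C^1$-regularity of $f$ and on the decay exponent $p>\alpha$ (which is what renders $R$ absolutely convergent, cf. \eqref{est1R}).

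Finally, since $v$ is a fixed point, $v=F_\Phi(v)=A_\alpha(1)\Phi+(\alpha+1)(1-\alpha/p)\,A_\alpha(1)R$, which by the preceding paragraph equals $A_\alpha(1)\Phi$ minus the Duhamel term at $t=1$. Thus \eqref{integralKdV} holds at $t=1$, and by the scaling reduction of the first step it holds for every $t>0$, completing the proof.
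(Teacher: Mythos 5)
Your proposal is correct and follows essentially the same route as the paper's own proof: the paper likewise uses Proposition \ref{regularitytheorem} to justify an integration by parts in $\sigma$ on a truncated integral (the boundary term at $\sigma = M|x|$ vanishing as $M\to\infty$ precisely because $f$ is bounded, H\"older continuous, and $1-\alpha/p>0$), together with the change of variable $\sigma\mapsto |x|/\tau^{1/(\alpha+1)}$, the dilation identity \eqref{tata}, the homogeneity of $\Phi$ for the linear part, and the $L^{1^+}\to L^\infty$ bound \eqref{estlinfty} to commute $A_\alpha(t)$ with the truncation limit --- exactly the interchange you flag as the main obstacle. The only cosmetic differences are that you reduce to $t=1$ by scaling while the paper argues at arbitrary $t>0$, and that you read the integration by parts in the direction Duhamel-to-$R$, whereas the paper starts from the fixed-point identity $v=F_\Phi(v)$ and recovers the Duhamel integral; these are the same computation run in opposite directions.
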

\begin{proof}
 Let $ v $ be a fixed point of  $ F_\Phi $.
Recall that $ R $ given by \eqref{defR}  is well-defined for almost $x\in \R $ since
 $ A_\alpha(-1) g\in C(\R) $ with $g:=\frac{v^{p+1}}{p+1} $.
 Then using  that  $ A_\alpha(1) $ commutes with any function of $ t $
  as well as dilation properties \eqref{tata} of $ A_\alpha $, we infer that
 $ u(t,x) = t^{-\frac{\alpha}{(\alpha+1)p}} v(x/t^\frac{1}{\alpha+1}) $
satisfies for $ t> 0 $,
\begin{align}
 u(t,x) & = 
 t^{-\frac{\alpha}{(\alpha+1)p}} \Bigl(A_\alpha(1) \Phi\Bigr) (\frac{x}{t^\frac{1}{\alpha+1}})
 +(\alpha+1)(1-\frac\alpha p)t^{-\frac{\alpha}{(\alpha+1)p}}
 \Bigl(A_\alpha(1)R(\cdot)\Bigr)\Bigl(\frac{x}{t^\frac{1}{\alpha+1}}\Bigr) \nonumber \\
 & =   t^{-\frac{\alpha}{(\alpha+1)p}} \Bigl(A_\alpha(t) \Phi(\frac{\cdot}{t^\frac{1}{\alpha+1}})\Bigr) (x)
 +(\alpha+1)(1-\frac\alpha p)t^{-\frac{\alpha}{(\alpha+1)p}}\Bigl(A_\alpha(t)R(\frac{\cdot}{t^\frac{1}{\alpha+1}} )\Bigr)(x)
\label{19}
 \; .
\end{align}
 Note that, $ \Phi $ being an homogeneous  function of degree $
-\alpha/p $, it holds 
\begin{equation} \label{20}
t^{-\frac{\alpha}{(\alpha+1)p}} \Bigl( (A_\alpha(t)
\Phi(\frac{\cdot}{t^\frac{1}{\alpha+1}})\Bigr) (x) =(A_\alpha(t) \Phi)(x) \; 
\end{equation}
since $ A_\alpha(t) $ does commute with any function only depending on $t$. 
Moreover, since according to
 Proposition 2, $ A_\alpha(-1) g \in C^1(\R) $, the integral in
the right-hand side of \re{defR}  converges and we are allowed
 to integrate by parts to obtain for $ x\neq 0 $ and $M\ge t^{-1/(\alpha+1)}$,
\begin{align}
& (1-\frac{\alpha}{p})\frac{\sgn(x)}{|x|^{\alpha/p}} \int^{M|x|}_{|x|/t^\frac{1}{\alpha+1}}
\Bigl( (A_\alpha(-1) g)(x/t^{\frac{1}{\alpha+1}})-
  (A_\alpha(-1) g)(\sigma \, \sgn(x))
 \Bigr) \sigma^{-2+\alpha/p} \, d\sigma \nonumber \\
& \quad =
\frac{\sgn(x)}{|x|^\frac{\alpha}{p}} \int^{M|x|}_{|x|/t^{1/(\alpha+1)}}  (A_\alpha(-1) g)'(\sigma \, \sgn(x))  \sigma^{-1+\alpha/p}
 \, d\sigma   \nonumber \\
&\quad\quad +  \frac{\sgn(x)}{M^{1-\alpha/p}\, |x|} \Bigl[ (A_\alpha(-1)g)(x/t^\frac{1}{\alpha+1})
 -(A_\alpha(-1)g)(Mx) \Bigr] \; . \label{1etoile}
\end{align}
Applying that  $ f=A_\alpha(-1)g \in C^{0,\delta} $  with $ \delta=0 $ and $ \delta=0^+ $
we obtain for $ t\ge M^{-(\alpha+1)} $ that 
$$
\Bigl| \frac{\sgn(\cdot)}{M^{1-\alpha/p}|\cdot|} \Bigl[ (A_\alpha(-1)g)(\cdot/t^\frac{1}{\alpha+1})
 -(A_\alpha(-1)g)(M\cdot) \Bigr|_{1^+} \le \frac{1}{M^{(1-\alpha/p)-}} \; .
$$
where we used $ \delta=0 $ to treat the integral over $ |x|\ge 1 $ and $ \delta=0^+ $ to treat the integral over$ |x|<1$.
On the other hand, performing the change of variable
 $ \sigma\mapsto |x|/\tau^{\frac{1}{\alpha+1}} $ and using again dilation
 properties of the group $ A_\alpha $, one infers
\arraycolsep2pt
 \begin{eqnarray}
\frac{\sgn(x)}{|x|^\frac{\alpha}{p}}
\int^{M|x|}_{|x|/t^{1/(\alpha+1)}} & & \hspace*{-3mm} (A_\alpha(-1) g)'(\sigma \sgn(x))  \sigma^{-1+\alpha/p}
 \, d\sigma  \nonumber \\
& = & \int_{1/M^{\alpha+1}}^t \, \tau^{-1-\alpha/(\alpha+1)p}
\Bigl(A_\alpha(-1)g'(\cdot)\Bigr)\Bigl(\frac{x}{\tau^\frac{1}{\alpha+1}}\Bigr)
\, d\tau \nonumber \\
 & = &
  \int_{1/M^{\alpha+1}}^t \, \tau^{-1-\alpha/(\alpha+1)p}
\Bigl(A_\alpha(-\tau)g'\Bigl(\frac{\cdot}{\tau^\frac{1}{\alpha+1}}\Bigr) \Bigr)(x)
\, d\tau \nonumber \\
 & = &
  \int_{1/M^{\alpha+1}}^t  A_\alpha(-\tau)\Bigl(u^p(\tau,\cdot)
u_x(\tau,\cdot )\Bigr)(x) \, d \tau
 \end{eqnarray}
\arraycolsep5pt
Since thanks to \cite{KPV}, Corollary 2.3, one has
\begin{equation}\label{estlinfty}
|A_\alpha(t) \varphi|_\infty \le c t^{-(\frac{1}{\alpha+1})^+} |\varphi|_{1+},
\end{equation}
 applying $ A_\alpha(t) $ to each member of \re{1etoile}, we obtain for
 $ t\ge M^{-(\alpha+1)} $
$$
t^{-\frac{\alpha}{(\alpha+1)p}} \Bigl( A_\alpha(t) R(\frac{\cdot}{t^\frac{1}{\alpha+1}}) \Bigr)(x)
 = \int_{1/M^{\alpha+1}}^t \Bigl( A_\alpha(t-\tau) (u^p(\tau,\cdot) u_x(\tau,\cdot) )\Bigr)(x)
 \, d\tau + \epsilon(1/M) \; ,
$$
where $ \epsilon(y) \to 0 $ as $ y\to 0 $ and we recall that  w. Therefore for any $ (t,x) \in \R_+^*\times
 \R $, the integral of the right hand side of the above equality
converges at the origin  and one has
\begin{equation} \label{2etoile}
t^{-\frac{\alpha}{(\alpha+1)p}} \Bigl( A_\alpha(t) R(\frac{\cdot}{t^\frac{1}{\alpha+1}}) \Bigr)(x)
 = \int_{0}^t \Bigl( A_\alpha(t-\tau) (u^p(\tau,\cdot) u_x(\tau,\cdot) )\Bigr)(x)
 \, d\tau.
\end{equation}
Formulas \re{19}-\re{20} and \re{2etoile} lead to
\begin{equation}
 u(t,x) =(A_\alpha(t)\Phi)(x) - \int_0^t \Bigl[ A_\alpha(t-\tau) (u^p(\tau,\cdot)
u_x(\tau,\cdot))\Bigr](x) \, d\tau , \quad (t,x) \in \R_+^* \times \R. \label{eqint}
\end{equation}
It remains to show that $ u $ satisfies the initial data at least in a
 weak sense. From \re{19}--\re{20}, one has
$$
u(t,x)=(A_\alpha(t)\Phi)(x) -\Bigl(A_\alpha(t)[t^{-\frac{\alpha}{(\alpha+1)p}}
 R(\frac{\cdot}{t^\frac{1}{\alpha+1}} ) ] \Bigr)(x) \; .
$$
Since $ A_\alpha(-1)g\in C^{0,\delta } $, $0\le \delta\le 1 $, it is easy to check that that
\begin{align*}
|t^{-\frac{\alpha}{(\alpha+1)p}} R(\frac{x}{t^\frac{1}{\alpha+1}} )|
 & \leq  \frac{t^{\frac{1}{\alpha+1}(1-\alpha/p)}}{|x|^{\alpha/p}}  \int_{|x|}^\infty
 \sigma^{-2+\alpha/p} |f(\frac{\sigma \sgn(x)}{t^\frac{1}{\alpha+1}})-f(\frac{x}{t^\frac{1}{\alpha+1}}) |  \, d\sigma \\
& \lesssim   \frac{t^{\frac{1}{\alpha+1}(1-\alpha/p)}}{|x|^{\alpha/p}}
 t^{-\delta/(\alpha+1)}
 \int_{|x|}^\infty |\sigma|^{-2 +\delta + \frac{\alpha}{p}} \, d\sigma \\
 & \lesssim   t^{\frac{1}{\alpha+1} (1-\frac{\alpha}{p}-\delta)}
|x|^{-1+\delta},  \quad \forall \,  0\le \delta<1-\frac\alpha p.
\end{align*}
Taking again $ \delta=0 $ to treat the integral over $ |x|\ge 1 $ and $ \delta=0^+ $ for the integral over $ |x|<1 $, we infer that 
$$
\left| t^{-\frac{\alpha}{(\alpha+1)p}} R(\frac{\cdot}{t^\frac{1}{\alpha+1}} ) \right|_{1^+} \lesssim t^{\frac{1}{\alpha+1}(1-\frac \alpha p)^-}.
$$
It follows that for any $\varphi\in \cal{S}(\R)$, one has
\begin{align*}
\left| \left( A_\alpha(t) \left[t^{-\frac{\alpha}{(\alpha+1)p}} R(\frac{\cdot}{t^\frac{1}{\alpha+1}} )\right], \varphi\right)_{L^2}\right| &= \left| \left(t^{-\frac{\alpha}{(\alpha+1)p}} R(\frac{\cdot}{t^\frac{1}{\alpha+1}} ), A_\alpha(-t)\varphi\right)_{L^2}\right| \\
&\lesssim \left| t^{-\frac{\alpha}{(\alpha+1)p}} R(\frac{\cdot}{t^\frac{1}{\alpha+1}} ) \right|_{1^+} |A_\alpha(-t)\varphi|_ {\infty^-}\\
&\lesssim t^{\frac{1}{\alpha+1}(1-\frac \alpha p)^-} |\varphi|_{H^1}.
\end{align*}
Therefore,
$$
A_\alpha(t)[t^{-\frac{\alpha}{(\alpha+1)p}} R(\frac{\cdot}{t^\frac{1}{\alpha+1}} ) ]
 \xrightarrow[t\to 0]{} 0 \quad \mbox{ in } {\cal S}'
$$
and
$$
v(t) -A_\alpha(t) \Phi  \xrightarrow[t\to 0]{} 0 \quad \mbox{ in } {\cal S}'
$$
Finally, since $ \Phi\in {\cal S}' $ and $ A_\alpha(\cdot) $ is continuous on
 $ L^2(\R) $, it is easy to check that
$$
A_\alpha(t) \Phi \xrightarrow[t\to 0]{}\Phi \quad \mbox{ in } {\cal S}'
$$
and thus the initial condition is satisfied at least in $ {\cal S}' $.
\end{proof}

\section{Proof of Theorem \ref{theo2}}
We prove here that the constructed profile $v$ satisfies equation \eqref{profile-eq}. First recall that 
$$
v=A_\alpha(1)\Phi
 +(\alpha+1)(1-\frac\alpha p)\,  A_\alpha(1) R 
 $$
 with $R $ defined in \eqref{defR}. According to  \eqref{yy} in the proof of Proposition \ref{regularitylemma}, if $ f\in C^{0,\beta} $ with $ 0<\beta<1-\frac{\alpha}{p} $ then $v\in W^{(\beta+\frac{\alpha-1}{2})^-,\infty^-} $. For $ p>\frac{\alpha}{\alpha-1} $, Proposition \ref{regularitytheorem} ensures that $ f \in C^{0,1^+} $ and thus we get that $v\in W^{(1-\frac{\alpha}{p} +\frac{\alpha-1}{2})^-,\infty-} $. Hence, $v\in C^1(\R) $ with $v\in L^\infty(\R)  $ and $v'\in L^\infty(\R)  $  whenever 
 $$
 1-\frac{\alpha}{p} + \frac{\alpha-1}{2}>1 \Leftrightarrow p > \frac{2\alpha}{\alpha-1} \; .
 $$
Therefore, for $  p>\frac{2\alpha}{\alpha-1}$, differentiating \eqref{eqint} with respect to $ t$ and recalling that $ u(t,x) = t^{-\frac{\alpha}{(\alpha+1)p}} v(x/t^\frac{1}{\alpha+1}) $, we obtain for $ t=1 $, 
 \begin{align}
  - \frac{\alpha}{(\alpha+1)p}v &- \frac{x}{\alpha+1} v'= \partial_{t_{\vert t=1}} u  \nonumber\\
  =& \lim_{\tau\to 0} \frac{u(1+\tau)-u(1)}{\tau} \nonumber\\
 = &\lim_{\tau\to 0} \Bigl[ \frac{(A_\alpha(1+\tau) -A_\alpha(1))\Phi}{\tau}\nonumber\\
&- \frac{A_\alpha(1+\tau)\int_0^{1+\tau}A_\alpha(-s) \partial_x (u^{p+1}(s)) \, ds -A_\alpha(1)\int_0^{1} A_\alpha(-s) \partial_x (u^{p+1}(s)) \,ds }{(p+1) \; \tau}\Bigr] \label{deriv}
 \end{align}
 Now, according to Lemma \ref{linearlemma}, $ A_\alpha(t) \Phi \in L^\infty(\R) $ for $ t\sim 1 $ and thus for any $ \varphi\in {\mathcal S}(\R) $ it holds
  \begin{align}
 \lim_{\tau\to 0} \Bigl( \frac{(A_\alpha(1+\tau)-A_\alpha(1)) \Phi}{\tau},  \varphi \Bigr)_{\S',\S} & = 
 \lim_{\tau\to 0} \Bigl( \Phi, \frac{(A_\alpha(-1-\tau)-A_\alpha(-1))  \varphi}{\tau} \Bigr)_{\S',\S}  \nonumber\\
  &= \Bigl( \Phi,  \partial_{t_{\vert t=1}}(A_\alpha(-t) \varphi) \Bigr)_{\S',\S}\nonumber \\
  &=\Bigl( \Phi, - D^\alpha \partial_x A_\alpha(-1) \varphi\Bigr)_{\S',\S} \nonumber\\
  & = \Bigl(  D^\alpha \partial_x A_\alpha(1)\Phi,  \varphi\Bigr)_{\S',\S}\label{deriv2}
 \end{align}
To treat  the second term in the right-hand side of \eqref{deriv}, we first notice that 
\begin{align}
A_\alpha(1+\tau)\int_0^{1+\tau}A_\alpha(-s)  \partial_x  (u^{p+1}(s))  \, ds -&A_\alpha(1)\int_0^{1} A_\alpha(-s) \partial_x  (u^{p+1}(s)) \,ds \nonumber \\
&= (A_\alpha(1+\tau)-A_\alpha(1))\int_0^{1} A_\alpha(-s) \partial_x  (u^{p+1}(s)) \,ds\nonumber\\
& + A_\alpha(1+\tau)\int_1^{1+\tau} A_\alpha(-s) \partial_x  (u^{p+1}(s)) \,ds \nonumber\\
&=B_1+B_2 \; .\label{deriv3}
\end{align}
According to the proof of Proposition \ref{regularitytheorem}, the fixed point $ v$ belongs to $ L^{p^+} \cap L^\infty $
 and the expression of $u$ leads to 
  $$
 |u^{p+1}(s, \cdot)|_2 = s^{\frac{p-2\alpha(p+1)}{2(\alpha+1)p}} |v|^{p+1}_{2(p+1)}\; .
 $$
Since $A_\alpha$ is unitary in $ L^2 $ and $\frac{p-2\alpha(p+1)}{2(\alpha+1)p}>-1 \Leftrightarrow 3p>2\alpha $, we infer that 
$ \int_0^t A_\alpha(-s) u^{p+1}(s)\, ds \in L^2 $. 
Since $ \partial_x $ commutes with the integral in $s $ and with the linear evolution $ A_\alpha $, we thus may write 
 \begin{align}
 \lim_{\tau\to 0} \Bigl(\frac{B_1}{\tau} ,  \varphi \Bigr)_{\S',\S} & = 
 \lim_{\tau\to 0} \Bigl( \int_0^{1} A_\alpha(-s)  u^{p+1}(s) \,ds,  \frac{-(A_\alpha(-1-\tau)-A_\alpha(-1))  \varphi'}{\tau}\Bigr)_{\S',\S} \nonumber \\
  &= \Bigl( \int_0^{1} A_\alpha(-s)  u^{p+1}(s) \,ds,  -\partial_{t_{\vert t=1}}(A_\alpha(-t) \varphi ') \Bigr)_{\S',\S} \nonumber\\
  &=\Bigl( \int_0^{1} A_\alpha(-s)  u^{p+1}(s) \,ds, D^\alpha \partial_x A_\alpha(-1) \varphi'\Bigr)_{\S',\S} \nonumber\\
  & = \Bigl(  D^\alpha \partial_x A_\alpha(1)\int_0^{1} A_\alpha(-s) \partial_x  u^{p+1}(s) \,ds,  \varphi\Bigr)_{\S',\S}
  \label{deriv4}
 \end{align}
Finally, since  $s\mapsto u^{p+1}(s) $ is continuous with values in $ L^2(\R) $ for $ s$ close to $1$, we may also write 
 \begin{align}
 \lim_{\tau\to 0} \Bigl(\frac{B_2}{\tau} ,   \varphi \Bigr)_{\S',\S} & = 
 \lim_{\tau\to 0} \Bigl( \frac{\int_1^{1+\tau} A_\alpha(-s)  u^{p+1}(s) \,ds}{\tau} , -A_\alpha(-1-\tau)  \varphi'\Bigr)_{\S',\S}  \nonumber\\
  &= \Bigl(  A_\alpha(-1) u^{p+1}(1) , -A_\alpha(-1)  \varphi' \Bigr)_{\S',\S}\nonumber \\
  &= \Bigl(  \partial_x (u^{p+1})(1) ,  \varphi \Bigr)_{\S',\S} \label{deriv5}
  \end{align}
Hence, gathering \eqref{deriv}-\eqref{deriv5} and using \eqref{eqint} with $t=1$ we infer that the following equality holds in  distributional sense 
$$
- \frac{\alpha}{(\alpha+1)p}v - \frac{x}{\alpha+1} v'=-\frac{1}{p+1}\partial_x (u^{p+1}) (1) +D^\alpha \partial_x u(1) 
$$
which gives \eqref{profile-eq}.  In particular
\begin{equation}\label{reg}
D^\alpha v'=- \frac{\alpha}{(\alpha+1)p}v - \frac{x v'}{\alpha+1} +\frac{1}{p+1} (v^{p+1})' \in L^\infty_{loc}(\R).
\end{equation}
Hence, $ v'\in W^{\alpha, \infty}_{loc} $ which ensures that $ v\in C^2(\R) $ since $ \alpha>1$. Re-injecting this information in \eqref{reg}, we get that actually $ v\in C^3(\R) $  and repeating this argument we obtain by direct recurrence that actually $ v\in C^\infty(\R) $.

 \end{document}